\newcommand{\m}[1]{{\bf{#1}}}
\newcommand{\g}[1]{\boldsymbol #1}
\newcommand{\C}[1]{{\cal {#1}}} 
\newcommand{\mbb}[1]{\mathbb{#1}}
\newcommand{\R}{\mbb{R}}
\newcommand{\T}{^{\sf T}}    
\newcommand{\tx}[1]{\textrm{#1}}
\newcommand{\inv}{^{-1}}  
\newcommand{\intk}{^{(k)}}  
\newcommand{\intO}{^{(1)}}  
\newcommand{\intK}{^{(K)}}  
\newcommand{\ds}{\displaystyle} 
\newcommand{\sdag}{^{\dagger}} 
\newcommand{\dt}[1]{\textrm{d}{#1}} 
\newcommand{\ddt}[2]{\frac{\textrm{d}{#1}}{\textrm{d}{#2}}} 
\newcommand{\deldt}[2]{\frac{\partial{#1}}{\partial{#2}}} 
\newcommand*{\Scale}[2][4]{\scalebox{#1}{$#2$}}
\newtheorem{theorem}{Theorem}
\newtheorem{corollary}{Corollary}
\title{\LARGE \bf
Modified Legendre-Gauss Collocation\\Method for Solving Optimal Control\\Problems with Nonsmooth Solutions
}
\author{Gabriela Abadia-Doyle${}^1$ and Anil V. Rao${}^2$}
\date{}
\begin{document}

\renewcommand{\thefootnote}{\fnsymbol{footnote}}
\footnotetext[1]{The authors gratefully acknowledge support for this research from the U.S. National Science Foundation under the Graduate Research Fellowship Program and grant CMMI-2031213, from the U.S. Office of Naval Research under grant N00014-22-1-2397, and from the U.S. Air Force Research Laboratory under contract FA8651-21-F-1041.}
\renewcommand{\thefootnote}{\arabic{footnote}}
\footnotetext[1]{Gabriela Abadia-Doyle is a Ph.D.~student in the Department of Mechanical and Aerospace Engineering, University of Florida, Gainesville, FL 32611, USA {\tt\small gabadia97@ufl.edu}.}
\footnotetext[2]{Anil V. Rao is a Professor in the Department of Mechanical and Aerospace Engineering, University of Florida, Gainesville, FL 32611, USA {\tt\small anilvrao@ufl.edu}.}

\maketitle
\thispagestyle{plain}
\pagestyle{plain}

\begin{abstract}
A modified form of Legendre-Gauss orthogonal direct collocation is developed for solving optimal control problems whose solutions are nonsmooth due to control discontinuities.  This new method adds switch-time variables, control variables, and collocation conditions at both endpoints of a mesh interval, whereas these new variables and collocation conditions are not included in standard Legendre-Gauss orthogonal collocation. The modified Legendre-Gauss collocation method alters the search space of the resulting nonlinear programming problem and enables determining accurately the location of the nonsmoothness in the optimal control. The transformed adjoint system of the modified Legendre-Gauss collocation method is then derived and shown to satisfy a discrete form of the continuous variational necessary conditions for optimality. The method is motivated via a control-constrained triple-integrator minimum-time optimal control problem where the solution possesses a two-switch bang-bang optimal control structure. In addition, the method developed in this paper is compared with existing Gaussian quadrature collocation methods. The method developed in this paper is shown to be capable of accurately solving optimal control problems with a discontinuous optimal control.
\end{abstract}

\section{Introduction}

Over the past few decades, direct collocation methods have become popular for solving general constrained optimal control problems numerically. An advantage of such a direct method is that first-order optimality conditions obtained via the calculus of variations do not need to be derived and a Hamiltonian boundary value problem, which is inherently unstable, does not need to be solved. Among these methods, the class of Gaussian quadrature direct orthogonal collocation has garnered significant interest \cite{BensonRao2006, GargRao2009, GargRao2010, GargRao2011,FahrooRoss2001}. In this approach, the state is commonly approximated using a basis of Lagrange polynomials with Gaussian quadrature points serving as the support points. The resulting finite-dimensional Gaussian quadrature collocation method forms a nonlinear programming problem (NLP) that can then be solved using well-known nonlinear optimization software. Well-developed Gaussian quadrature methods employ Legendre-Gauss (LG) points \cite{BensonRao2006}, Legendre-Gauss-Radau (LGR) points \cite{GargRao2009,GargRao2010, GargRao2011}, or Legendre-Gauss-Lobatto (LGL) points \cite{FahrooRoss2001}. Additionally, convergence theory for Gaussian quadrature collocation methods that collocate the dynamics at LG or LGR points has demonstrated that, under certain assumptions of smoothness and coercivity, these methods converge to a local minimizer of the optimal control problem at an exponential rate as a function of the polynomial degree of the approximation \cite{HagerLiu2018}. 

When the solution of an optimal control problem is nonsmooth due to discontinuities in the control, both the standard Gaussian quadrature methods and the associated convergence theory are no longer applicable. Numerical methods for solving optimal control problems with nonsmooth solutions have been considered previously. A well-studied class of optimal control problems with nonsmooth solutions arises when the control appears linearly in the problem formulation \cite{BrysonHo1975}. In such cases, the weak form of Pontryagin's minimum principle must be satisfied meaning that the optimal control is that which minimizes the Hamiltonian when the state and costate are fixed at their optimal values \cite{Pontryagin1962}. The resulting optimal control is said to be {\em bang-bang} if the switching function is not equal to zero for a non-zero time interval. The difficulty with such an optimal control problem is that the precise locations of any discontinuities in the control must typically be determined numerically. A common approach to handling these types of problems is through {\em hp} mesh refinement, that is, adjusting the number and width of mesh intervals and/or adjusting the degree of the polynomial approximation \cite{DarbyRao2010,PattersonRao2015,LiuRao2015,LiuRao2018,MillerRao2021}. However, these forms of static mesh refinement tend to place an unnecessarily large number of collocation points and mesh intervals in the neighborhood of control discontinuities. Furthermore, depending on the discretization scheme employed, some of the aforementioned methods do not result in a control approximation that captures both the left-hand and right-hand limits of a bang-bang optimal control at a single discrete switch-time.


Another approach to handling these types of problems is to introduce a variable mesh such that parameters corresponding to the switching structure are included as variables to be optimized \cite{SchlegelMarquardt2004,SchlegelMarquardt2006,AghaeeHager2021,WangYang2014,AgamawiRao2020,PagerRao2022,CuthrellBiegler1987,CuthrellBiegler1989,RossFahroo2004,ChenBiegler2016,ChenBiegler2019,EideRao2021}. Most of these methods rely on some form of structure detection to first approximate the control structure. When solving a bang-bang optimal control problem using a multi-stage direct shooting method, switch-times can be included as variables but additional control constraints corresponding to proper control arc classification may be necessary \cite{SchlegelMarquardt2004,SchlegelMarquardt2006}. If the control arcs have been classified by analysis of the switching function, a switch point algorithm \cite{AghaeeHager2021} can optimize over the locations of the switch points assuming existence of said switch points is known. Similarly, multi-interval Gaussian quadrature collocation with variables corresponding to either the duration of a detected control arc \cite{WangYang2014} or the switch-time itself \cite{AgamawiRao2020,PagerRao2022,CuthrellBiegler1987,CuthrellBiegler1989,RossFahroo2004} can be implemented as a multi-phase problem with enforcement of supplementary constraints arising through structure detection. An alternate approach for constructing a variable mesh is through nested direct transcription \cite{ChenBiegler2016,ChenBiegler2019}; this method first solves an inner NLP on a static mesh and then solves an outer NLP to determine mesh interval widths and enforce additional constraints corresponding to certain optimality conditions. Without the inclusion of constraints that enforce the detected control structure, it has been shown that introducing a variable mesh point may result in the NLP converging to a pseudo-minimizer. Because such a formulation adds a degree of freedom to the problem, the search space is increased and the Lavrentiev phenomenon may occur. Such a phenomenon is observed where a numerical approximation of a continuous optimization problem leads to an optimal objective value that differs from the true optimal value \cite{BallKnowles1987,EideRao2021}. Lastly, a modified LGR collocation method \cite{EideRao2021} has recently been developed and shown to reduce the Lavrentiev gap by introducing collocation constraints at the end of each mesh interval in addition to variable switch-times. Note, the modified LGR collocation method \cite{EideRao2021} applies the newly-introduced collocation constraints exclusively to those differential equations that explicitly depend on the control, while the endpoint collocation constraints of the LGL collocation method \cite{FahrooRoss2001,RossFahroo2004} apply to all the differential equations of the dynamical system. 

Motivated by the prevalence of optimal control problems with nonsmooth solutions as well as the potential for improving solution accuracy and computational efficiency simultaneously, the objective of this paper is to develop a method for accurately solving bang-bang optimal control problems without enforcing explicit constraints on the control structure. In particular, this paper describes a modified LG direct collocation method. The modified LG method developed in this paper augments the search space of the NLP such that time and control at the switch-times are included as variables in the optimization. Furthermore, dual values of the control (corresponding to the left-hand and right-hand limits of the control at a switch-time) are obtained at the locations of the control discontinuities.

The contributions of this work are as follows. First, a general method capable of optimizing the switch-times associated with control discontinuities is developed. Second, the developed method takes advantage of the accuracy of Gaussian quadrature at the Gauss points. It also addresses the drawback of multi-interval standard LG collocation related to absent discrete control values at adjacent interval interfaces. Unlike the modified LGR collocation method which introduces a collocation constraint and control variable at just one endpoint of each mesh interval, the modified LG collocation method introduces additional collocation constraints and control variables at {\em both} the initial and terminal endpoints of each mesh interval. It is important to note that, while the Lagrange polynomial approximation of the state in a mesh interval has support points at the initial endpoint and LG nodes, the new collocation constraints rely on derivatives of the Lagrange basis polynomials evaluated at both the initial and terminal points. Third, the transformed adjoint system and an associated costate mapping for the presented method are derived by comparing the Karush-Kuhn-Tucker (KKT) conditions of the NLP with the first-order variational conditions of the continuous optimal control problem. Moreover, the discrete and continuous adjoint systems within the modified LG collocation scheme are equivalent, with the discrete system being full-rank. Lastly, a comparison of the modified LG collocation method with existing Gaussian quadrature collocation methods is provided.

\section{Notation and Conventions}\label{sect:Notation}
In this paper, the following notation and conventions will be used. First, $t\in[t_0,t_f]$ denotes the independent variable corresponding to the original formulation of the optimal control problem, where $t_0$ and $t_f$ are the initial and terminal values of $t$.  Second, when formulating the Bolza optimal control problem (see Section \ref{sect:Bolza}), the variable $t$ is transformed to the variable $T\in[-1,+1]$ via the affine transformation
\begin{equation}
    t = \frac{t_f-t_0}{2}T + \frac{t_f+t_0}{2}.
\end{equation}
Third, when decomposing the Bolza optimal control problem into multiple intervals, the variable $T$ is divided into a $K$-interval mesh with $K+1$ mesh points $(T_0,\ldots,T_K)$ such that $T_0=-1$, $T_K=+1$, $T_0<T_1<\cdots<T_K$ (that is, the mesh points are strictly monotonically increasing), and $\C{I}_k=[T_{k-1},T_k]$ denotes the $k^{\tx{th}}$ mesh interval.  Fourth, within every mesh interval $\C{I}_k$, the variable $T$ is transformed to the variable $\tau\in[-1,+1]$ via the affine transformation
\begin{equation}
    T = \frac{T_k-T_{k-1}}{2}\tau + \frac{T_k + T_{k-1}}{2}.
\end{equation}
The mesh intervals have the property that $\bigcup_{k=1}^{K} \C{I}_k = [-1,+1]$ and $\C{I}_k \cap \C{I}_{k+1} = \{ T_k \},~(k=1,\ldots,K-1)$. Fifth, the notation $x^{(k)}$ is used to denote a variable (in this case, $x$) or function that is defined in mesh interval $k$. Finally, it is noted that the variable $t$ is not used in the remainder of this paper but has been described here for completeness.


Next, the following vector-matrix notation is employed in this paper.  First, all vectors will be denoted as {\em row} vectors. Second, $z(\tau)\in\R$ denotes a scalar function $z$ of the independent variable $\tau$. Next, $\m{z}(\tau)\in\R^n$ denotes a vector function of $\tau$ with dimension $n$. 
Because all vectors are row vectors, $\m{z}(\tau)\in\R^n$ is given as $\m{z}(\tau) := [z_1(\tau),z_2(\tau),\ldots,z_{n-1}(\tau),z_n(\tau)].$ Additionally, the derivative of a vector function $\m{z}(\tau)$ with respect to $\tau$, denoted by $\Dot{\m{z}}(\tau)$, is given as $\tx{d}\m{z}(\tau)/\tx{d}\tau := \Dot{\m{z}}(\tau) =[\Dot{z}_1(\tau),\Dot{z}_2(\tau),\ldots,\Dot{z}_{n-1}(\tau),\Dot{z}_n(\tau)].$ Using the aforementioned row vector conventions, the inner product between two vectors of the same dimension is denoted by $\langle \cdot, \cdot \rangle$. 

The numerical method developed in this paper is a form of direct collocation, meaning unknown functions are typically parameterized using a basis of approximating polynomials. Suppose that $\m{z}(\tau)$ is approximated using a basis of Lagrange polynomials, $\ell_j(\tau),~(j=0,\ldots,N)$, as
\begin{equation}\label{zApprox}
    \m{z}(\tau) \approx \hat{\m{z}}(\tau) = \sum_{j=0}^N \m{Z}_j \ell_j(\tau), \quad \ell_j(\tau) = \prod_{\substack{i=0 \\ i \neq j}}^N \frac{\tau-\tau_i}{\tau_j-\tau_i},
\end{equation}
where $(\tau_0,\ldots,\tau_N)$ are the support points of $\ell_j(\tau),~(j=0,\ldots,N)$, and  $(\m{Z}_0,\ldots,\m{Z}_N)$ are the coefficients of the resulting Lagrange interpolating polynomial. It is known that the Lagrange polynomials $\ell_j(\tau),~(j=0,\ldots,N)$, satisfy the isolation property
\begin{equation}\label{isolation}
    \ell_j(\tau_i) = \delta_{ij}=\begin{cases}
        1, & i=j, \\ 0, & i\neq j,
    \end{cases}
\end{equation}
which implies that $\hat{\m{z}}(\tau_i)=\m{Z}_i\in\R^n, (i=0,\ldots,N)$. Furthermore, the notation $\m{Z}_{i:j}$, where $j>i$, denotes a matrix whose rows are given by the vectors $(\m{Z}_i,\ldots,\m{Z}_j)$, that is,
\begin{equation}\label{Z_i:j}
    \m{Z}_{i:j} := \begin{bmatrix}
        \m{Z}_i \\ \m{Z}_{i+1} \\ \vdots \\ \m{Z}_j
    \end{bmatrix}\in\R^{(j-i+1)\times n}.
\end{equation}

Next, the notation $\m{A}\T$ denotes the transpose of a matrix $\m{A}$. The following conventions are used to specify certain elements of matrix $\m{A}$:
\begin{equation}\nonumber
    \begin{aligned}
        \m{A}_{(i,j)} :={} & \tx{element in row \emph{i} and column \emph{j}}, \\\nonumber
        \m{A}_{(:,i)} :={} & \tx{elements in all rows and column \emph{i}}, \\\nonumber
        \m{A}_{(i,:)} :={} & \tx{elements in all columns and row \emph{i}}, \\\nonumber
        \m{A}_{(i:j,k:l)} :={} & \tx{elements in rows \emph{i} through \emph{j} and columns \emph{k} through \emph{l}}.
    \end{aligned}
\end{equation}

Lastly, the following conventions are adopted for functions and their partial derivatives. Let $g~:~\R^{n}\rightarrow \R$ be a scalar function of the vector $\m{z}\in\R^n$. Then the gradient of $g(\m{z})$ is given as 
\begin{equation}
    \deldt{g}{\m{z}} := \nabla_{\m{z}} g = \left[ \deldt{g}{z_1}, \deldt{g}{z_2}, \ldots, \deldt{g}{z_n} \right].
\end{equation}
Now, if $\m{f}~:~\R^n\rightarrow\R^m$ is a vector function of the vector $\m{z}\in\R^n$, then $\m{f(z)}$ is given as $\m{f(z)} := [f_1(\m{z}),f_2(\m{z}),\ldots,$ $f_m(\m{z})].$ The notation $\m{f}_i$ denotes a row vector corresponding to the function $\m{f}(\m{z}_i)$ for $\m{z}_i\in\R^n$, that is $\m{f}_i := \m{f}(\m{z}_i) = \m{f}(\m{z}(\tau_i)).$ Similar to the convention used in \eqref{Z_i:j}, $\m{f}_{i:j}$ is used to denote a matrix whose rows are given by the vectors $(\m{f}_i,\ldots,\m{f}_j)$, that is
\begin{equation}
    \m{f}_{i:j} := \begin{bmatrix}
        \m{f}_i \\ \m{f}_{i+1} \\ \vdots \\ \m{f}_j
    \end{bmatrix} = \begin{bmatrix}
        \m{f}(\m{z}_i) \\ \m{f}(\m{z}_{i+1}) \\ \vdots \\ \m{f}(\m{z}_j)
    \end{bmatrix} \in \R^{(j-i+1)\times m}.
\end{equation}
Furthermore, the Jacobian of $\m{f(z)}$ is defined as
\begin{equation}
    \deldt{\m{f}}{\m{z}}  := \left[ \deldt{f_1}{\m{z}}\T, \deldt{f_2}{\m{z}}\T, \ldots, \deldt{f_m}{\m{z}}\T \right].
\end{equation}

\section{Bolza Optimal Control Problem}\label{sect:Bolza}
Without loss of generality, consider the following Bolza form of an optimal control problem.  Determine the state, $\m{x}(T)\in\R^{n_x}$ and $\m{v}(T)\in\R^{n_v}$, the control $\m{u}(T) \in \R^{n_u}$, the initial time, $t_0 \in \R$, and the final time, $t_f \in \R$, that minimize the objective functional 
\begin{equation}\label{bolza_obj}
    \C{J} = \C{M}(\m{x}(-1),\m{v}(-1),\m{x}(+1),\m{v}(+1),t_0,t_f) + \frac{t_f-t_0}{2}\int_{-1}^{+1} \C{L}(\m{x}(T),\m{v}(T),\m{u}(T))\tx{d}T,
\end{equation}
subject to the dynamic constraints 
\begin{equation}\label{dynamics}
\begin{array}{rcl}
    \ds\ddt{\m{x}(T)}{T} & = & \ds\frac{t_f-t_0}{2}\m{f}_x(\m{x}(T),\m{v}(T)), \vspace{3pt} \\
    \ds\ddt{\m{v}(T)}{T} & = & \ds\frac{t_f-t_0}{2}\m{f}_v(\m{x}(T),\m{v}(T),\m{u}(T)), 
\end{array}
\end{equation}
the boundary conditions
\begin{equation}\label{boundary}
\m{b}(\m{x}(-1),\m{v}(-1),\m{x}(+1),\m{v}(+1),t_0,t_f) = \bf0,
\end{equation}
and the control inequality constraints
\begin{equation}\label{control_ineq}
\m{c}(\m{u}(T)) \leq \bf0,
\end{equation} 
where the functions $\C{M}, ~\C{L}, ~\m{f}_x, ~\m{f}_v, ~\m{b}, \tx{ and } \m{c}$ are defined by the mappings $\C{M} : \R^{n_x}\times \R^{n_v}\times\R^{n_x}\times\R^{n_v}\times \R \times \R \rightarrow \R$; $\C{L} : \R^{n_x}\times\R^{n_v}\times \R^{n_u} \rightarrow\R$; $\m{f}_x : \R^{n_x}\times \R^{n_v} \rightarrow \R^{n_x}$; $\m{f}_v : \R^{n_x}\times \R^{n_v}\times\R^{n_u} \rightarrow \R^{n_v}$; $\m{b} : \R^{n_x}\times \R^{n_v}\times\R^{n_x}\times\R^{n_v}\times \R \times \R \rightarrow \R^{n_b}$; and $\m{c} : \R^{n_u} \rightarrow\R^{n_c}$. The optimal control problem given by \eqref{bolza_obj}-\eqref{control_ineq} deliberately separates those differential equations that explicitly depend on the control, $\m{f}_v\in\R^{n_v}$, and those that do not, $\m{f}_x\in\R^{n_x}$. The modified Legendre-Gauss collocation method exploits this separation. Furthermore, no generality is lost with such a decomposition since $n_x=0$ is a special case of the dynamics in \eqref{dynamics}. 

Consistent with the notation and conventions in Section~\ref{sect:Notation}, the previously defined Bolza optimal control problem is partitioned into a $K$-interval mesh. Consequently, the multiple-interval Bolza optimal control problem written in terms of the independent variable $\tau$ is defined as follows. Minimize the objective functional 
\begin{equation}\label{bolzamulti_obj}\Scale[0.91]{
    \begin{aligned}
        \C{J} ={} & \C{M} \left( \m{x}\intO(-1),\m{v}\intO(-1),\m{x}\intK(+1),\m{v}\intK(+1),t_0,t_f \right) 
         + \frac{t_f-t_0}{2} \sum_{k=1}^K \int_{-1}^{+1} \alpha_k\C{L} \left( \m{x}\intk(\tau),\m{v}\intk(\tau),\m{u}\intk(\tau) \right)\dt{\tau},
    \end{aligned}}
\end{equation}
subject to the dynamic constraints 
\begin{equation}\label{bolzamulti_dynamics}\Scale[1]{
    \begin{aligned}
        \Dot{\m{x}}\intk(\tau) ={} & \ds\frac{t_f-t_0}{2} \alpha_k \m{f}_x \left( \m{x}\intk(\tau),\m{v}\intk(\tau) \right), \vspace{3pt} \\ 
    \Dot{\m{v}}\intk(\tau) ={} & \ds\frac{t_f-t_0}{2} \alpha_k \m{f}_v \left( \m{x}\intk(\tau),\m{v}\intk(\tau),\m{u}\intk(\tau) \right),
    \end{aligned}~(k=1,\ldots,K),}
\end{equation}
the boundary conditions
\begin{equation}\label{bolzamulti_boundary}
\m{b}\left(\m{x}\intO(-1),\m{v}\intO(-1),\m{x}\intK(+1),\m{v}\intK(+1),t_0,t_f \right) = \bf0,
\end{equation}
the control inequality constraints
\begin{equation}\label{bolzamulti_control_ineq}
\m{c} \left(\m{u}\intk(\tau)\right) \leq \m{0}, \quad (k=1,\ldots,K),
\end{equation} 
and the state continuity constraints $( \m{x}\intk (+1),\m{v}\intk (+1) ) = ( \m{x}^{(k+1)}(-1),\m{v}^{(k+1)}(-1)),~ (k=1,\ldots,K-1)$, where $\alpha_k$, $(k=1,\ldots,K)$, is a mesh interval scaling factor given by
\begin{equation}\label{alphak}
    \alpha_k := \ddt{T}{\tau} = \frac{T_k - T_{k-1}}{2},\quad (k=1,\ldots,K).
\end{equation}

\section{Legendre-Gauss Collocation}\label{sect:LG}
The multiple interval Legendre-Gauss (LG) direct orthogonal collocation method for optimal control \cite{BensonRao2006,GargRao2010} can be applied to approximate the multiple interval form of the Bolza optimal control problem defined in Section~\ref{sect:Bolza}. For simplicity of discussion and clarity of subsequent derivations, it is assumed that the number of collocation points, denoted by $N$, is the same in each mesh interval. Next, let $(\tau_1,\tau_2,\ldots,\tau_N)$ be the $N$ LG nodes on the interval $(-1,+1)$ while $\tau_0=-1$ and $\tau_{N+1}=+1$ are located at the endpoints of each interval. Now, following the notation and conventions defined in Section~\ref{sect:Notation}, let the state in each interval be approximated by a polynomial of degree at most $N$ using a basis of Lagrange polynomials, $\ell_j(\tau)$, such that
\begin{equation}\label{state_approx}
    \begin{array}{rcccl}
        \m{x}\intk(\tau) & \approx & \hat{\m{x}}\intk(\tau) &=& \ds\sum_{j=0}^N \m{X}_j\intk \ell_j(\tau), \\
        \m{v}\intk(\tau) & \approx & \hat{\m{v}}\intk(\tau) &=& \ds\sum_{j=0}^N \m{V}_j\intk \ell_j(\tau), 
    \end{array}
    \quad (k=1,\ldots,K),
\end{equation}
where the row-vectors $\m{X}_j\intk\in\R^{n_x}$ and $\m{V}_j\intk\in\R^{n_v}$, $k\in\{1,\ldots,K\}$, correspond to the components of the state approximations at $\tau_j,~(j=0,\ldots,N+1)$,  and $\ell_j(\tau)$ are the Lagrange basis polynomials given in \eqref{zApprox}
whose support points are the initial endpoint, $\tau_0$, and the $N$ LG nodes, $(\tau_1,\ldots,\tau_N)$. Note that the resulting Lagrange interpolating polynomials in \eqref{state_approx} are defined on $\tau\in[-1,+1]$ but the terminal endpoint, $\tau_{N+1}$, is not included as a support point. 

Differentiating $\m{x}\intk(\tau)$ and $\m{v}\intk(\tau)$ in \eqref{state_approx} leads to
\begin{equation}\label{statederiv_approx}
    \begin{array}{rcccl}
        \dot{\m{x}}\intk(\tau) & \approx & \Dot{\hat{\m{x}}}\intk(\tau) &=&  \ds\sum_{j=0}^N \m{X}_j\intk \dot{\ell}_j(\tau), \\
        \dot{\m{v}}\intk(\tau) & \approx & \Dot{\hat{\m{v}}}\intk(\tau) &=& \ds\sum_{j=0}^N \m{V}_j\intk \dot{\ell}_j(\tau),
    \end{array}
    \quad (k=1,\ldots,K).
\end{equation}
Using $\m{U}_i\intk\in\R^{n_u}$, $k\in\{1,\ldots,K\}$, a row vector corresponding to the discrete control approximation at $\tau_i,~(i=1,\ldots,N)$, the state derivative approximation of \eqref{statederiv_approx} is collocated with the right-hand side of the system dynamics in \eqref{bolzamulti_dynamics} at the $N$ LG points of each mesh interval, producing the following defect constraints,
\begin{equation}\label{LGdefect}
    \begin{array}{rcl}
       \ds\sum_{j=0}^N \m{D}_{(i,j)}\m{X}_j\intk & = & \ds\frac{t_f-t_0}{2}\alpha_k {\m{f}_x}_i\intk,  \\
       \ds\sum_{j=0}^N \m{D}_{(i,j)}\m{V}_j\intk & = & \ds\frac{t_f-t_0}{2}\alpha_k {\m{f}_v}_i\intk, 
    \end{array}
    \quad (i = 1,\ldots,N;~ k = 1,\ldots,K),
\end{equation}
where $\m{D}_{(i,j)} := \dot{\ell}_j(\tau_i),~ (i=1,\ldots,N;~j=0,\ldots,N),$ are the elements of the $N\times (N+1)$ {\em standard LG differentiation matrix}. It can be seen in \eqref{LGdefect} that the dynamic constraints are only collocated at the LG points and {\em not} at the boundary points. Since the Lagrange interpolating polynomials are used to approximate the state at the initial endpoint of an interval and the collocation points, the approximation of the state at the terminal endpoint of each mesh interval is constrained via the Gauss quadrature constraint,
\begin{equation}
    \begin{array}{rcl}
       \m{X}_{N+1}\intk & = & \m{X}_0\intk + \ds \frac{t_f-t_0}{2}\alpha_k \sum_{i=1}^N w_i {\m{f}_x}_i\intk,  \\
        \m{V}_{N+1}\intk & = & \m{V}_0\intk + \ds \frac{t_f-t_0}{2}\alpha_k \sum_{i=1}^N w_i {\m{f}_v}_i\intk,
    \end{array}
    \quad (k=1,\ldots,K),
\end{equation}
where $w_i,~(i=1,\ldots,N)$, are the Gauss quadrature weights. 

The aforementioned discretization leads to the following nonlinear programming problem (NLP) that approximates the optimal control problem given in Section~\ref{sect:Bolza}. Minimize the objective function
\begin{equation}\label{LG_obj}
    \begin{aligned}
        \C{J} ={} & \C{M} \left(\m{X}_0\intO, \m{V}_0\intO, \m{X}_{N+1}\intK, \m{V}_{N+1}\intK, t_0,t_f\right) + \frac{t_f-t_0}{2}\sum_{k=1}^K \sum_{i=1}^N \alpha_k w_i \C{L}_i\intk,
    \end{aligned}
\end{equation}
subject to 
\begin{align}
    \m{D}_{(i,:)}\m{X}_{0:N}\intk - \frac{t_f-t_0}{2}\alpha_k {\m{f}_x}_i\intk & = \m{0},  \label{LG_DX}\\
    \m{D}_{(i,:)}\m{V}_{0:N}\intk - \frac{t_f-t_0}{2}\alpha_k {\m{f}_v}_i\intk & = \m{0},   \label{LG_DV}\\    
    \m{X}_{N+1}\intk - \m{X}_0\intk - \ds\frac{t_f-t_0}{2}\alpha_k \sum_{i=1}^N w_i {\m{f}_x}_i\intk & = \m{0}, \label{LG_Xf}\\
    \m{V}_{N+1}\intk - \m{V}_0\intk - \ds\frac{t_f-t_0}{2}\alpha_k \sum_{i=1}^N w_i {\m{f}_v}_i\intk & = \m{0}, \label{LG_Vf}\\   \m{b}\left(\m{X}_0\intO,\m{V}_0\intO,\m{X}_{N+1}\intK,\m{V}_{N+1}\intK,t_0,t_f\right) &= \m{0}, \label{LG_boundary}\\
    \m{c}\left(\m{U}_i\intk \right) &\leq \m{0}, \label{LG_path}
\end{align}
for $(i=1,\ldots,N)$ and $(k=1,\ldots,K)$, where $\C{L}_i\intk := \C{L}(\m{X}_i\intk,\m{V}_i\intk,\m{U}_i\intk)$ is the discrete approximation of the integrand in the Lagrange cost of \eqref{bolzamulti_obj}. Continuity in the state is enforced implicitly by using the same variable for the pair $\m{X}_{N+1}\intk$ and $\m{X}_0^{(k+1)}$ and the pair $\m{V}_{N+1}\intk$ and $\m{V}_0^{(k+1)}$, $(k=1,\ldots,K-1)$, at each of the interior mesh points. Equations~\eqref{LG_obj}-\eqref{LG_path} form what is referred to as the {\em standard Legendre-Gauss collocation method}.

\section{Lavrentiev Phenomenon in Orthogonal Collocation}\label{sect:lavrentiev}

The concept of Lavrentiev phenomenon may sometimes manifest itself when using LG collocation. Such a phenomenon occurs when a finite dimensional nonlinear programming problem converges to a so-called {\em pseudo-minimizer} that differs from the true optimal solution. The works of Ball, Mizel, and Knowles \cite{BallMizel1984,BallKnowles1987} describe cases in which the minimizers in problems of the calculus of variations may have unbounded derivatives at certain points, preventing said minimizers from satisfying the classical first-order Euler–Lagrange necessary optimality conditions. Notably, using a finite-element or finite-difference scheme to approximate a finite dimensional minimization problem that is subject to the Lavrentiev phenomenon typically fails to converge to the true minimizer \cite{BallKnowles1987}. The existence of Lavrentiev phenomenon has been demonstrated when solving a bang-bang optimal control problem with LGR collocation \cite{EideRao2021}. Sensitivity to the size of the search space of the optimization problem can affect the ability of the solver to converge to the correct minimizer. Comparable to LGR collocation, the LG collocation method described in Section~\ref{sect:LG} demonstrates similar characteristics of a finite element method that approximates an optimal control problem with a finite dimensional nonlinear programming problem. The remainder of this section provides a demonstration of how the Lavrentiev phenomenon can generate misleading results when using LG collocation to solve an optimal control problem whose solution is nonsmooth. Numerical results shown are obtained using the nonlinear optimization software IPOPT \cite{BieglerZavala2009} set to a NLP tolerance of $\epsilon=10^{-6}$.

\subsection{Motivating Example: Minimum-Time Triple Integrator}\label{subsect:Motivating Example}
Consider the minimum-time control of a triple-integrator system, given by the optimal control problem:
\begin{equation}\label{tripleIntegrator}
    \min~t_f \tx{ subject to } \left\{ \begin{aligned}
        \ds\ddt{x_1(T)}{T} ={} & \ds\frac{t_f}{2}x_2(T), \\
        \ds\ddt{x_2(T)}{T} ={} & \ds\frac{t_f}{2}v(T), \\
        \ds\ddt{v(T)}{T} ={} & \ds\frac{t_f}{2}u(T), \\
        |u(T)| \leq{} & u_M,     
    \end{aligned}\right.
\end{equation}
with fixed boundary conditions at both $T=-1$ and $T=+1$ for each state component. The optimal solution to the problem given by \eqref{tripleIntegrator} is derived in \cite{LiniAurelio2013}. For the initial conditions $(x_1(-1),x_1(+1))=(0,13/4)$, $(x_2(-1),x_2(+1))=(0,9/4)$, and $(v(-1),v(+1)=(0,3/2)$ and the control limit $u_M=1/2$, the optimal control solution is bang-bang with two switch points --- one occurring at $T_1^*=-5/7\approx -0.7143$ and the other at $T_2^*=-1/7\approx -0.1429$. The optimal control is 
\begin{equation}
    u^*(T) = \begin{cases}
        u_M, & T\in \left[-1,T_1^*\right], \\
        -u_M, & T\in \left[T_1^*,T_2^*\right], \\
        u_M, & T\in \left[T_2^*,+1\right],
    \end{cases}
\end{equation}
and the optimal final time is $t_f^*=7$. Furthermore, $x_1^*(T)$ is piecewise-cubic, $x_2^*(T)$ is piecewise-quadratic, and $v^*(T)$ is piecewise-linear. Using piecewise-polynomial approximations of the state as defined in Section~\ref{sect:LG}, it should be possible to obtain the optimal solution to this example using just three intervals. Following this reasoning, the given optimal control problem can be reformulated as
\begin{equation}\label{tripleIntegratorReformulation}\Scale[1]{
    \min~t_f \tx{ subject to } \left\{ 
    \begin{aligned}
        \Dot{x}_1\intk(\tau) ={} & \ds\frac{t_f}{2}\alpha_k x_2\intk(\tau), & (x_1^{(1)}(-1),x_1^{(3)}(+1)) ={}& (0,13/4), \\
        \Dot{x}_2\intk(\tau) ={} & \ds\frac{t_f}{2}\alpha_k v\intk(\tau), & (x_2^{(1)}(-1),x_2^{(3)}(+1)) ={}& (0,9/4),\\
        \Dot{v}\intk(\tau) ={} & \ds\frac{t_f}{2}\alpha_k u\intk(\tau), & (v^{(1)}(-1),v^{(3)}(+1)) ={}& (0,3/2),\\
        |u\intk(\tau)| \leq{} & \ds\frac{1}{2},& {}& {}
    \end{aligned}\right. ~~ (k=1,2,3).}
\end{equation}

\subsection{Search Space Using Standard LG Collocation}

Suppose the three-interval reformulation of the optimal control problem given in Section~\ref{subsect:Motivating Example} is to be solved with standard LG collocation. Since the highest-order component of the optimal trajectory is piecewise-cubic and LG collocation approximates the state in each interval by a polynomial of degree at most $N$, it should be possible to obtain the exact solution using $N=3$ Gauss quadrature points in each mesh interval, assuming the switch point is accurately approximated. Note that LG quadrature is exact for polynomials of degree at most $2N-1$. Now, define the expression obtained by solving for the control in \eqref{tripleIntegratorReformulation} as the {\em approximate control}, given by
\begin{equation}
    \hat{u}\intk(\tau) = \frac{2}{t_f}\frac{1}{\alpha_k}\Dot{\hat{v}}\intk(\tau),\quad (k=1,2,3),
\end{equation}
where $\hat{v}\intk(\tau)$ is the Lagrange polynomial approximation of the state $v\intk(\tau)$ and its derivative is given by \eqref{statederiv_approx}. Fig.~\ref{fig:LG Search Space - Approx Control} shows the control values obtained from solving the NLP using $N=3$ LG points in each mesh interval with the two free switch point variables each bounded by $T_i^* \pm 0.2,~(i=1,2)$. While the discrete control values lie within the admissible control limits as necessitated by the upper and lower bounds placed on the control variables in the NLP, the approximate control solution violates the admissible control limits. Additionally, one of the discrete control values lies on the interior of the control bounds which is in disagreement with the known bang-bang structure of the optimal control solution. Next, it can be observed that the switch-time variables converge to $T_1\approx -0.6539$ and $T_2\approx 0.0571$, corresponding to absolute errors of $\delta T_1\approx 0.06$ and $\delta T_2 \approx 0.20$. Finally, the pseudo-minimizer computed by the NLP solver results in an objective value of $t_f\approx 6.9448$. Therefore, the solution obtained via LG collocation is misleading since it results in an objective cost that is smaller than the true optimal cost of $t_f^*=7$. Similar to the results obtained and discussed in Ref.~\cite{EideRao2021}, including the switch-times $T_1$ and $T_2$ as variables in the NLP result in a larger search space corresponding to the added degrees of freedom afforded by the variable mesh. Without any additional constraints imposed on these added degrees of freedom, a Lavrentiev gap forms because the search space is too large. 

\begin{figure}[H]
    \centering
    \includegraphics[width=0.75\textwidth]{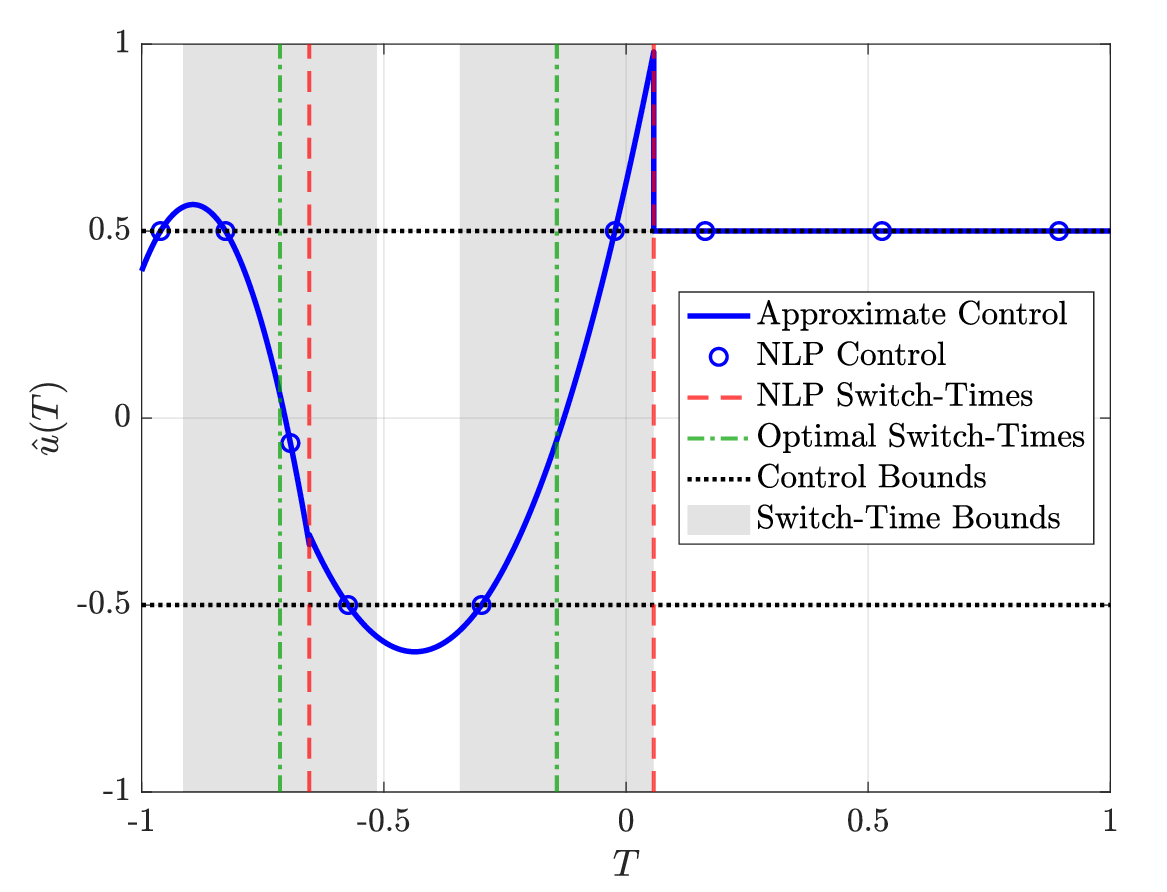}
    \caption{Control obtained for three-interval formulation of example given in \eqref{tripleIntegratorReformulation} using standard LG collocation.}
    \label{fig:LG Search Space - Approx Control}
\end{figure}

\section{Modified Legendre-Gauss Collocation}\label{sect:mLG}

Additional variables and corresponding constraints are now augmented to the standard LG collocation method in order to improve the approximation of nonsmoothness in the solution to the optimal control problem. In particular, control variables are introduced at the previously non-collocated endpoints of each mesh interval and collocation constraints are added at both endpoints of each mesh interval. The additional constraints are applied only to those differential equations that are a function of the control and allow for the numerical approximation of the newly included control variables. Furthermore, variables are included corresponding to the location of each interior mesh point. To understand why these new collocation constraints can be added to the original LG collocation formulation, consider the control-dependent differential equation in \eqref{tripleIntegratorReformulation}. Suppose the collocation constraints corresponding to $\Dot{v}\intk(\tau)$ are enforced and satisfied at the $N$ LG points in each mesh interval as in the standard LG collocation method, implying a solution exists. It should be possible to satisfy the $N+2$ conditions
\begin{equation}\label{mLG constraint motivation}\Scale[1]{
    \Dot{\hat{v}}\intk(\tau_i) - \ds\frac{t_f}{2}\alpha_k U_i\intk = 0, ~~ (i=0,\ldots,N+1;~ k=1,2,3),}
\end{equation}
in each mesh interval because the control is a variable that is solved for simultaneously with the state. Augmenting the standard LG collocation method with additional variables and constraints alters the search space of the NLP solver and, as a result, the location of the nonsmoothness in the solution to the optimal control problem may be approximated to a higher accuracy. 

\subsection{New Decision Variables}
The modified LG collocation method introduces new decision variables corresponding to the location of interior mesh points as well as new decision variables corresponding to the value of the control at the endpoints of each mesh interval. The interior mesh point variables are denoted $T_k,~(k=1,\ldots,K-1)$. The values of the discrete control approximation at the start and end of each mesh interval are denoted, respectively, by $\m{U}_0\intk$ and $\m{U}_{N+1}\intk,~(k=1,\ldots,K)$. It is important to note that $\m{U}_{N+1}\intk$ and $\m{U}_0^{(k+1)},~k\in\{1,\ldots,K-1\}$, correspond to the same mesh point $T_k$. Unlike the state approximation which implicitly maintains continuity at the mesh points by using the same variable for the pair $\m{X}_{N+1}\intk$ and $\m{X}_0^{(k+1)}$ and the pair $\m{V}_{N+1}\intk$ and $\m{V}_0^{(k+1)},~(k=1,\ldots,K-1)$, the control approximation needs not be continuous, as apparent in the case of nonsmoothness in the solution. Therefore, the dual values of the control at a mesh point $T_k$ allow the left-hand and right-hand limits of the control at $T_k$ be approximated such that $\m{u}(T_k^-) \approx \m{U}_{N+1}\intk$ and $\m{u}(T_k^+) \approx \m{U}_{0}^{(k+1)},~k\in\{1,\ldots,K-1\}$.

\subsection{New Constraints}\label{sect:mLG Constraints}
Additional constraints are now added to appropriately modify the search space such that the values of the new decision variables can be accurately approximated. Similar to the condition in \eqref{mLG constraint motivation}, these additional constraints consist of collocation constraints at the endpoints of each mesh interval, exclusively applied to those differential equations that are an explicit function of the control, i.e. $\m{f}_v(\m{x,v,u})$. It is important to note that the standard LG collocation method uses the initial endpoint and the LG nodes to formulate a basis of Lagrange polynomials for the purpose of approximating the state. Evaluating the derivative of this same basis of Lagrange polynomials at the endpoints of each interval results in a {\em modified LG differentiation matrix} of the form
\begin{equation}
    \Tilde{\m{D}} = \begin{bmatrix}
        \left[\Dot{\ell}_0(\tau_0),\ldots,\Dot{\ell}_N(\tau_0)\right] \\ 
        \m{D} \\
        \left[\Dot{\ell}_0(\tau_{N+1}),\ldots,\Dot{\ell}_N(\tau_{N+1})\right]
    \end{bmatrix}\in \R^{(N+2)\times (N+1)},
\end{equation}
where $\m{D}\in\R^{N\times(N+1)}$ is the standard LG differentiation matrix. The resulting collocation constraints at the initial and terminal endpoints of each mesh interval are then given by
\begin{equation}\label{mLG_colloc}
    \begin{array}{rcl}
        \ds\Tilde{\m{D}}_{(0,:)}\m{V}_{0:N}\intk - \frac{t_f-t_0}{2}\alpha_k {\m{f}_v}_0\intk & = & \m{0}, \vspace{3pt} \\ 
        \ds\Tilde{\m{D}}_{(N+1,:)}\m{V}_{0:N}\intk - \frac{t_f-t_0}{2}\alpha_k {\m{f}_v}_{N+1}\intk & = & \m{0}, 
    \end{array} \quad (k=1,\ldots,K),
\end{equation}
where $\Tilde{\m{D}}_{(0,:)}$ and $\Tilde{\m{D}}_{(N+1,:)}$ correspond to the first row and last row of $\Tilde{\m{D}}$, respectively. Note that these new collocation constraints only correspond to components of $\m{v}$ since $\m{f}_x(\m{x},\m{v})$ is not an explicit function of control. 

In addition to the endpoint collocation constraints given by \eqref{mLG_colloc}, the control inequality constraints in \eqref{LG_path} are augmented to include the new control variables using
\begin{equation}\label{mLG_c}
    \m{c}\left(\m{U}_i\intk \right) \leq \m{0},\quad (i=0,\ldots,N+1;~k=1,\ldots,K).
\end{equation}

Lastly, the inclusion of variable mesh points necessitates the following constraint on the mesh interval scaling factor $\alpha_k,~(k=1,\ldots,K)$, given by
\begin{equation}\label{alphak1}
    \sum_{k=1}^K \alpha_k -1=0 ,  \quad \alpha_k>0.
\end{equation}
These mesh interval scaling factors can be thought of as fractions of the mesh. Therefore, \eqref{alphak1} ensures that the sum of these fractions is equal to unity and that the timespan of each mesh interval is strictly positive. The standard Legendre-Gauss collocation method given by \eqref{LG_obj}-\eqref{LG_path} together with the constraints in \eqref{mLG_colloc}-\eqref{alphak1}, is referred to as the {\em modified Legendre-Gauss collocation method}.

\subsection{Search Space Using Modified LG Collocation}
\begin{figure}[b!]
    \centering
    \includegraphics[width=0.748\textwidth]{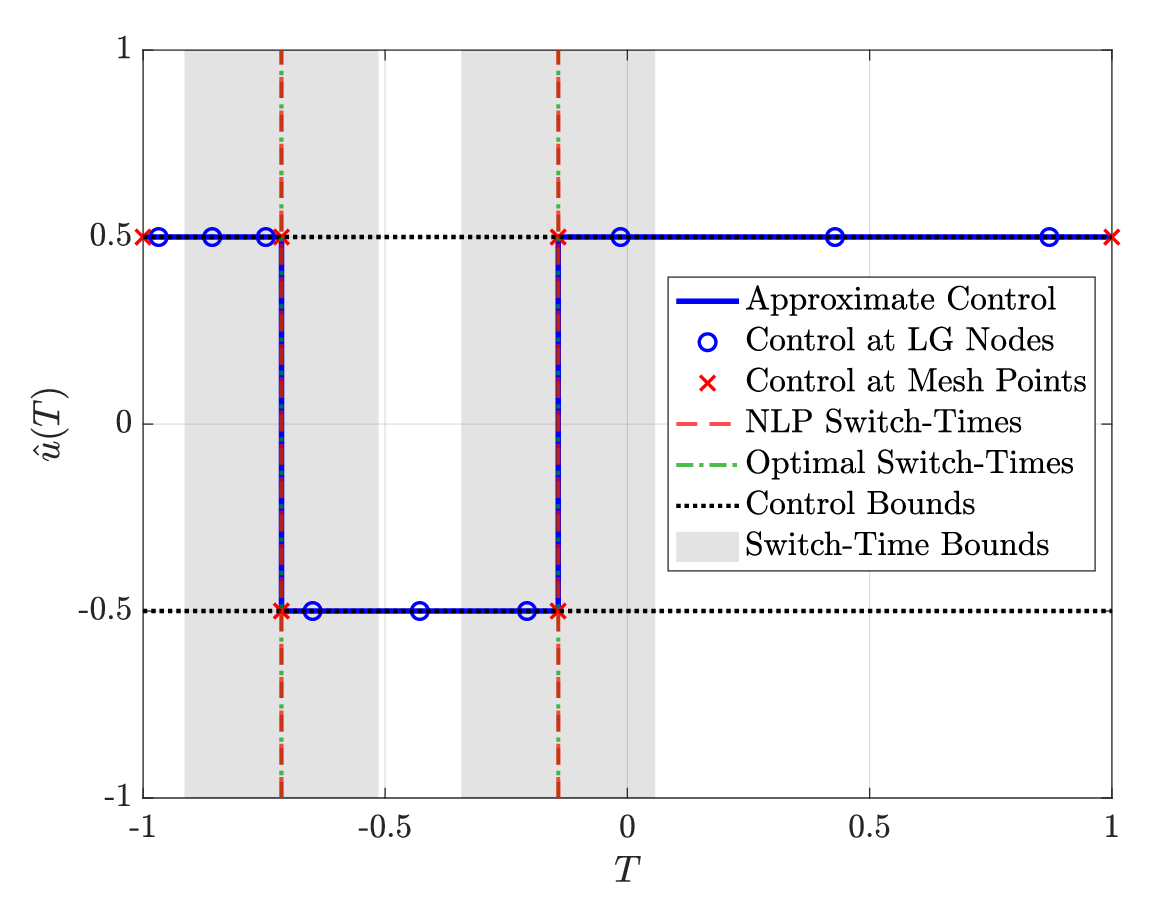}
    \caption{Control obtained for three-interval formulation of example given in \eqref{tripleIntegratorReformulation} using modified LG collocation.}
    \label{fig:mLG Search Space - Approx Control}
\end{figure}

The example of Section~\ref{subsect:Motivating Example} is now revisited using the modified LG collocation method. Fig.~\ref{fig:mLG Search Space - Approx Control} shows the control solution obtained from solving the NLP resulting from modified LG collocation with $N=3$ LG points in each mesh interval. Again, the switch-times are included as variables bounded by $T_i^* \pm 0.2,~(i=1,2)$; thus, the constraint on the mesh interval scaling factors $\alpha_k,~(k=1,\ldots,K),$ given by \eqref{alphak1} is enforced implicitly. Unlike the results obtained using standard LG collocation, the computed switch-times and control solution are in excellent agreement with the optimal switch-times and optimal control profile. The approximate control solution no longer violates the admissible control limits due to the enforcement of collocation constraints and control inequality constraints at both endpoints of each mesh interval. Furthermore, the approximate control solution does not violate the control bounds on the interior of each mesh interval because the switch-times are accurately computed. In other words, computation of the correct switch-times is imperative for obtaining an accurate approximate control solution. These results imply that the additional constraints included in the modified LG collocation method decrease the search space such that the NLP solver converges to the true minimizing solution. Finally, the additional control variables in the modified LG collocation method yield numerical approximations to the control at the endpoints of each mesh interval, addressing the absence of such a discrete approximation in the standard LG collocation method.



In order to visualize the impact that the Lavrentiev phenomenon has on the computed objective, the example problem can be solved with both standard LG collocation and modified LG collocation while the switch-times are fixed at varying locations along the domain $T\in[-1,+1]$. Fig.~\ref{fig: J vs Ts} shows the objective as a function of each switch-time, where one switch-time is fixed at its optimal value and the other is manually varied at different points in time. In Fig.~\ref{subfig: J vs Ts1}, the NLP is formulated with $T_2=T_2^*=-1/7$ and $T_1$ fixed at varying values in the region near $T_1^*$. The standard LG collocation method can converge to a pseudo-minimizer that results in a smaller objective value than the true optimal cost. Furthermore, for $T_1\gtrapprox -0.65$, the standard LG collocation method consistently converges to a smaller objective value than an equivalent modified LG mesh. While the modified LG collocation method appears to have a local minimum at $T_1\approx -0.65$, the associated cost is larger than the optimal cost $\C{J}^*=7$. With reasonably good bounds on the switch-time $T_1$ when it is left as a variable in the NLP, the modified LG collocation method will converge to the true minimizer, as exemplified in Fig.~\ref{fig:mLG Search Space - Approx Control}. Fig.~\ref{subfig: J vs Ts2} depicts a similar behavior when the NLP is formulated with $T_1=T_1^*=-5/7$ and $T_2$ fixed at varying values in the region near $T_2^*$. In this case, it can be observed that the pseudo-minimizer obtained with standard LG collocation when $T_2\approx 0.08$ results in an objective cost that is even smaller than the minimum cost depicted in Fig.~\ref{subfig: J vs Ts1}.

\begin{figure}[h!]
    \centering
    \begin{subfigure}{0.475\textwidth}
        \centering
        \includegraphics[width=\linewidth]{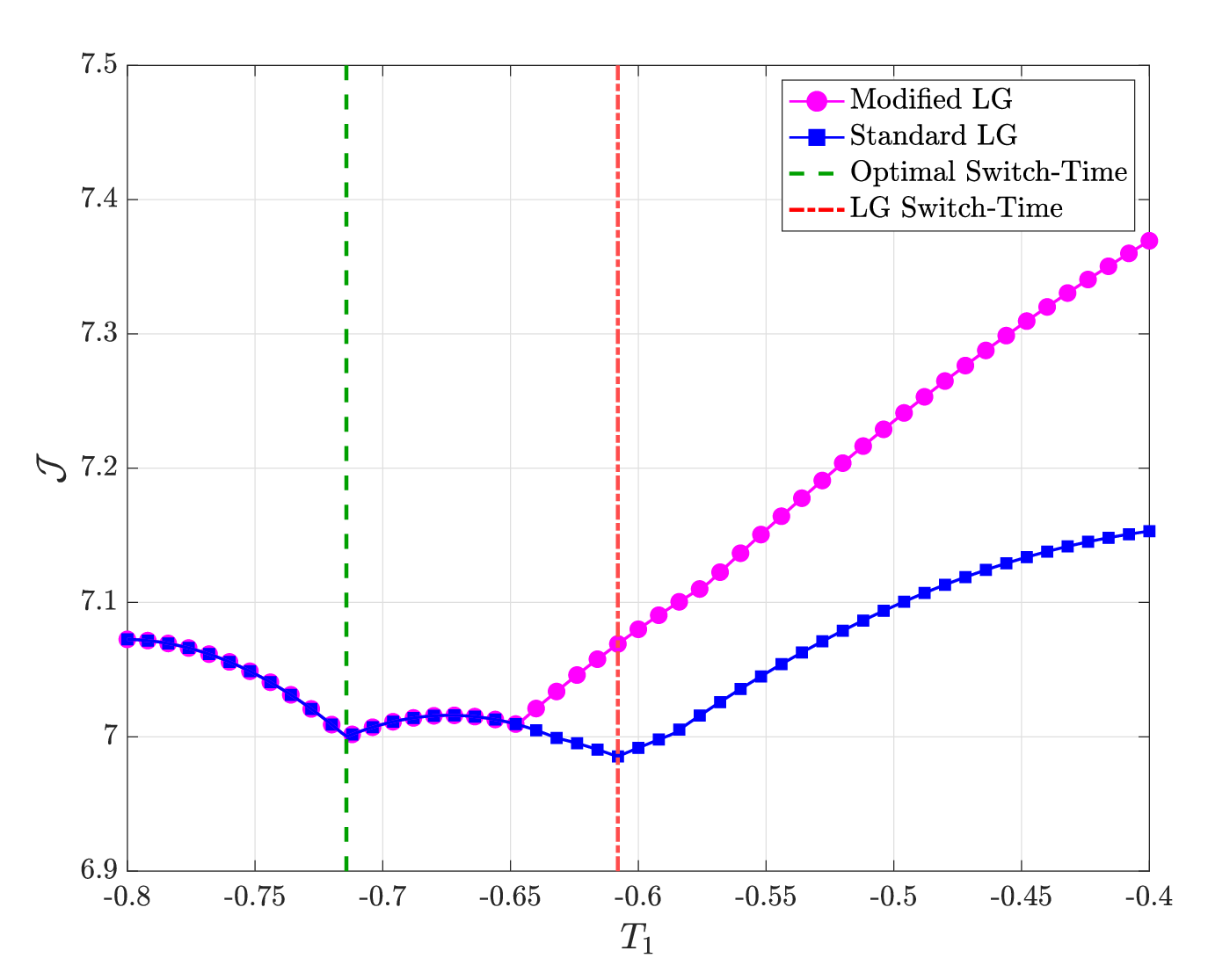}
        \caption{$\C{J}$ vs. $T_1$ when $T_2=T_2^*$.}\label{subfig: J vs Ts1}
    \end{subfigure}%
    ~ 
    \begin{subfigure}{0.475\textwidth}
        \centering
        \includegraphics[width=\linewidth]{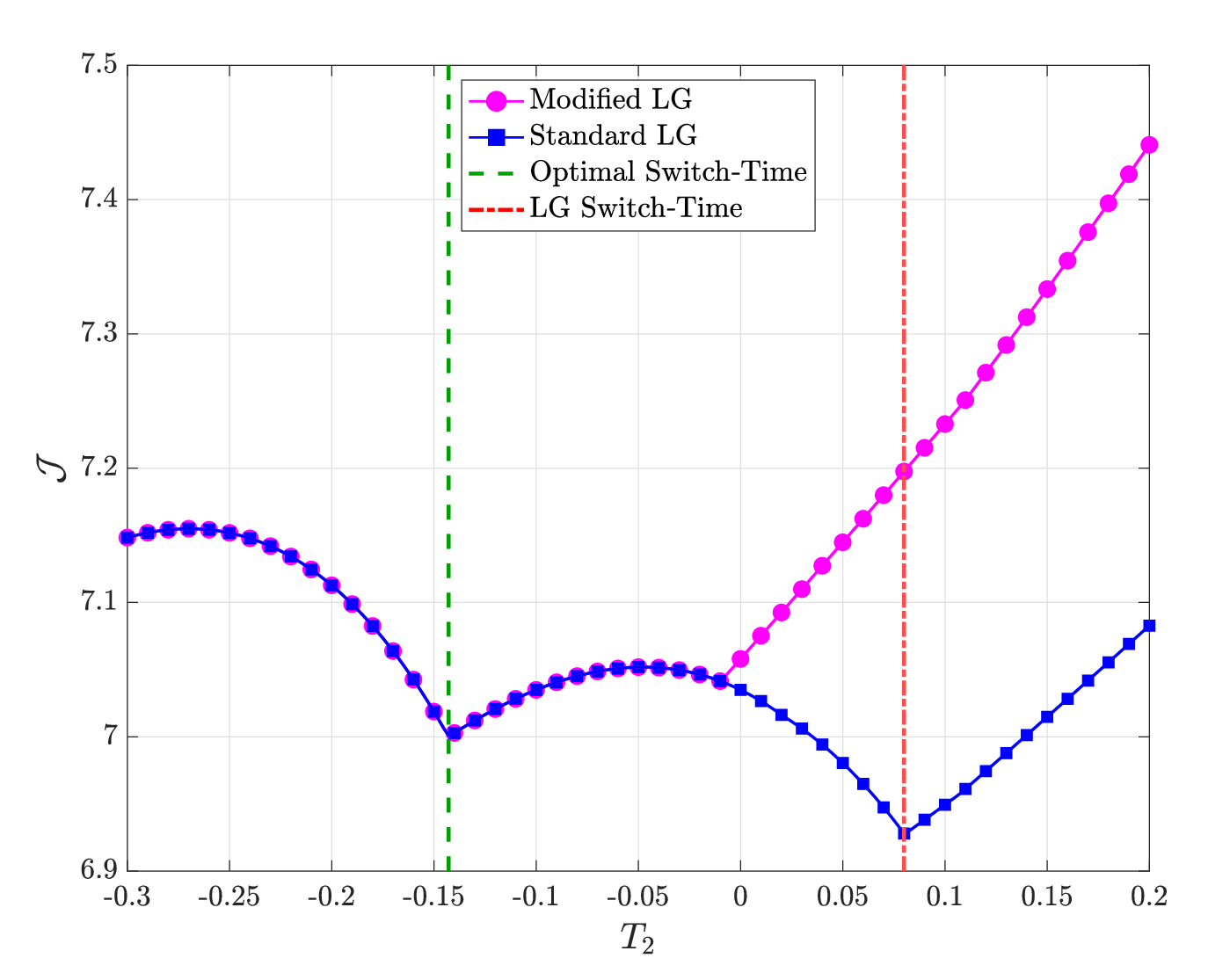}
        \caption{$\C{J}$ vs. $T_2$ when $T_1=T_1^*$.}\label{subfig: J vs Ts2}
    \end{subfigure}
    \caption{Objective cost computed for example problem when switch-times are fixed.}\label{fig: J vs Ts}
\end{figure}

\section{Costate Approximation}
In order to derive the transformed adjoint system of the modified LG collocation method, the Karush-Kuhn-Tucker (KKT) conditions of the NLP are mapped to the first-order optimality conditions of the continuous optimal control problem. These necessary conditions for optimality are derived using a variational approach which employs calculus of variations and Pontryagin's minimum principle \cite{Pontryagin1962} on the optimal control problem defined in Section~\ref{sect:Bolza}. To simplify the derivation, the control inequality path constraint of \eqref{control_ineq} is omitted from the problem formulation.

\subsection{Continuous-Time First-Order Necessary Conditions}
The continuous augmented Hamiltonian is defined as 
\begin{equation}\Scale[1]{
    \C{H}(\m{x},\m{v},\g{\lambda}_x,\g{\lambda}_v,\m{u}) := \C{L}(\m{x,v,u}) + \g{\lambda}_x\m{f}_x\T(\m{x,v}) + \g{\lambda}_v\m{f}_v\T(\m{x,v,u}),    }
\end{equation}
where $\g{\lambda}_x(T)\in\R^{n_x}$ and $\g{\lambda}_v(T)\in\R^{n_v}$ are the costates associated with $\m{x}(T)$ and $\m{v}(T)$, respectively. The continuous first-order optimality conditions can be shown to be
\begin{align}
    \left(\ddt{\m{x}}{T},\ddt{\m{v}}{T}\right) & = \frac{t_f-t_0}{2}(\m{f}_x,\m{f}_v), \\[10pt] \label{FirstOrderOptimality_start}    \left(\ddt{\g{\lambda}_x}{T},\ddt{\g{\lambda}_v}{T}\right) & = -\frac{t_f-t_0}{2}\left(\deldt{\C{H}}{\m{x}},\deldt{\C{H}}{\m{v}}\right), \\[10pt] 
    \m{0} & = \frac{t_f-t_0}{2}\deldt{\C{H}}{\m{u}}, \label{dHdU}\\[10pt] 
    \g{\lambda}_x(-1) & = -\frac{\partial\C{M}}{\partial\m{x}(-1)} + \g{\psi}\left[ \frac{\partial\m{b}}{\partial\m{x}(-1)} \right]\T, \\[10pt] 
    \g{\lambda}_v(-1) & = -\frac{\partial\C{M}}{\partial\m{v}(-1)} + \g{\psi}\left[ \frac{\partial\m{b}}{\partial\m{v}(-1)} \right]\T, \\[10pt] 
    \g{\lambda}_x(+1) & =  \frac{\partial\C{M}}{\partial\m{x}(+1)} - \g{\psi}\left[ \frac{\partial\m{b}}{\partial\m{x}(+1)} \right]\T, \\[10pt] 
    \g{\lambda}_v(+1) & =  \frac{\partial\C{M}}{\partial\m{v}(+1)} - \g{\psi}\left[ \frac{\partial\m{b}}{\partial\m{v}(+1)} \right]\T, \\[10pt] 
    \C{H}(t_0) & =  \frac{\partial\C{M}}{\partial t_0} - \g{\psi}\left[ \frac{\partial\m{b}}{\partial t_0} \right]\T, \\[10pt] 
    \C{H}(t_f) &= -\frac{\partial\C{M}}{\partial t_f} + \g{\psi}\left[ \frac{\partial\m{b}}{\partial t_f} \right]\T, 
\end{align}
where $\g{\psi}\in\R^{n_b}$ is the Lagrange multiplier associated with the boundary condition $\m{b}$. Furthermore, it has been shown in \cite{Benson2005} that the augmented Hamiltonian at the initial and final times can be written, respectively, as
\begin{align}
    \C{H}(t_0)&=-\frac{t_f-t_0}{2}\int_{-1}^1 \deldt{\C{H}}{t_0}\dt{T} + \frac{1}{2}\int_{-1}^1 \C{H}\dt{T}, \\[10pt] 
    \C{H}(t_f)&=\frac{t_f-t_0}{2}\int_{-1}^1 \deldt{\C{H}}{t_f}\dt{T} + \frac{1}{2}\int_{-1}^1 \C{H}\dt{T}. \label{FirstOrderOptimality_end}
\end{align}

\subsection{KKT Conditions of the NLP}
The KKT conditions of the NLP associated with the modified LG collocation method are obtained by setting equal to zero the derivatives of the augmented cost function, or Lagrangian, with respect to each variable. The Lagrangian associated with modified LG collocation is given as 
\begin{equation}\label{Ja}\Scale[0.85]{
    \begin{aligned}
        \C{J}_a ={} &\ds \C{J} - \sum_{k=1}^K\sum_{i=1}^N \left\langle {\g{\Lambda}_x}_i\intk,\m{D}_{(i,:)}\m{X}_{0:N}\intk -\frac{t_f-t_0}{2}\alpha_k {\m{f}_x}_i\intk \right\rangle 
        \ds - \sum_{k=1}^K\sum_{i=1}^N \left\langle {\g{\Lambda}_v}_i\intk,\m{D}_{(i,:)}\m{V}_{0:N}\intk -\frac{t_f-t_0}{2}\alpha_k {\m{f}_v}_i\intk \right\rangle \\
        &\ds -\sum_{k=1}^K \left\langle {\g{\Lambda}_x}_{N+1}\intk, \m{X}_{N+1}\intk - \m{X}_0\intk - \frac{t_f-t_0}{2}\alpha_k \sum_{i=1}^N w_i {\m{f}_x}_i\intk \right\rangle 
        \ds -\sum_{k=1}^K \left\langle {\g{\Lambda}_v}_{N+1}\intk, \m{V}_{N+1}\intk - \m{V}_0\intk - \frac{t_f-t_0}{2}\alpha_k \sum_{i=1}^N w_i {\m{f}_v}_i\intk \right\rangle \\
        &\ds - \sum_{k=1}^K \left\langle {\tilde{\g{\Lambda}}_v}{}_0\intk, \tilde{\m{D}}_{(0,:)}\m{V}_{0:N}\intk - \frac{t_f-t_0}{2}\alpha_k{\m{f}_v}_0\intk \right\rangle 
        \ds - \sum_{k=1}^K \left\langle {\tilde{\g{\Lambda}}_v}{}_{N+1}\intk, \tilde{\m{D}}_{(N+1,:)}\m{V}_{0:N}\intk - \frac{t_f-t_0}{2}\alpha_k{\m{f}_v}_{N+1}\intk \right\rangle \\
        &\ds -\g{\Psi}\m{b}\T\left(\m{X}_0\intO,\m{V}_0\intO,\m{X}_{N+1}\intK,\m{V}_{N+1}\intK,t_0,t_f\right) -\Theta \left( \sum_{k=1}^K \beta_k - 1 \right), 
    \end{aligned}}
\end{equation}
where $\C{J}$ is the objective function given by \eqref{LG_obj} and  $\g{\Lambda}_x\intk\in\R^{(N+1)\times n_x},~\g{\Lambda}_v\intk\in\R^{(N+1)\times n_v},~ {\tilde{\g{\Lambda}}_v}{}_0\intk\in\R^{n_v},~{\tilde{\g{\Lambda}}_v}{}_{N+1}\intk \in\R^{n_v},~\g{\Psi}\in\R^{n_b},\tx{ and } \Theta\in\R$ are the Lagrange multipliers. Furthermore, ${\g{\Lambda}_x}_i\intk$ and ${\g{\Lambda}_v}_i\intk$ denote the $i^{\tx{th}}$ rows of $\g{\Lambda}_x\intk$ and $\g{\Lambda}_v\intk$, respectively.

For the remainder of this discussion, let $\m{W} := \tx{diag}(w_1,\ldots,w_N)$ be a diagonal matrix of the LG quadrature weights. The following theorem is introduced that will allow the terms involving ${\m{f}_v}_0\intk$ and $\tilde{\m{D}}_{(0,1:N)}$ in \eqref{Ja} to be written as functions of $\m{X}_{1:N}\intk,~\m{V}_{1:N}\intk,~\m{U}_{1:N}\intk$, and $\m{D}$.

\begin{theorem}\label{Theorem 1}
     Let $(\tau_1,\ldots,\tau_N)$ be the Legendre-Gauss points on the interval $(-1,+1)$ and let $\tau_0=-1$ and $\tau_{N+1}=+1$. Furthermore, let $L_j(\tau)$ be a Lagrange basis polynomial, given by
     \begin{equation}\label{Lj}
         L_j(\tau) = \prod_{\substack{i=0 \\ i \neq j}}^{N+1} \frac{\tau-\tau_i}{\tau_j-\tau_i}, \quad (j=0,\ldots,N+1),
     \end{equation}
     with support points at $(\tau_0,\tau_1,\ldots,\tau_{N+1})$. Then, if $z(\tau)$ is a polynomial of degree at most $N-1$ on the interval $\tau\in[-1,+1]$, it is the case that
    \begin{equation}\label{Integrate f x L0dot}
        \int_{-1}^{+1} z(\tau) \Dot{L}_0(\tau) \dt{\tau} = -z(-1).
    \end{equation}
\end{theorem}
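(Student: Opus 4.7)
The plan is to prove Theorem 1 by a direct integration-by-parts argument, using the isolation property of the Lagrange basis and the exactness of $N$-point Legendre-Gauss quadrature for polynomials of degree up to $2N-1$.

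First I would apply integration by parts to the integral on the left-hand side of \eqref{Integrate f x L0dot}, obtaining
\begin{equation*}
\int_{-1}^{+1} z(\tau) \dot{L}_0(\tau)\,\dt{\tau} = \bigl[z(\tau)L_0(\tau)\bigr]_{-1}^{+1} - \int_{-1}^{+1} \dot{z}(\tau) L_0(\tau)\,\dt{\tau}.
\end{equation*}
Since the support points of $L_j(\tau)$ in \eqref{Lj} are $(\tau_0,\tau_1,\ldots,\tau_{N+1})$ with $\tau_0=-1$ and $\tau_{N+1}=+1$, the isolation property analogous to \eqref{isolation} gives $L_0(-1)=L_0(\tau_0)=1$ and $L_0(+1)=L_0(\tau_{N+1})=0$. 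The boundary term therefore collapses to $-z(-1)$, so it remains to verify that the residual integral vanishes.

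Next, I would analyze the degrees of the integrand in the remaining integral. Because $z(\tau)$ has degree at most $N-1$, its derivative $\dot{z}(\tau)$ has degree at most $N-2$. The basis polynomial $L_0(\tau)$ defined in \eqref{Lj} has degree exactly $N+1$ since it is built from $N+2$ support points. Hence the product $\dot{z}(\tau)L_0(\tau)$ is a polynomial of degree at most $(N-2)+(N+1)=2N-1$. Because the $N$-point Legendre-Gauss quadrature rule with nodes $(\tau_1,\ldots,\tau_N)$ and weights $(w_1,\ldots,w_N)$ is exact for all polynomials of degree at most $2N-1$, we obtain
\begin{equation*}
\int_{-1}^{+1} \dot{z}(\tau) L_0(\tau)\,\dt{\tau} = \sum_{i=1}^{N} w_i\, \dot{z}(\tau_i) L_0(\tau_i) = 0,
\end{equation*}
where the final equality follows from $L_0(\tau_i)=0$ for every $i\in\{1,\ldots,N\}$, again by the isolation property at the LG nodes. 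Combining this with the boundary term yields \eqref{Integrate f x L0dot}.

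I do not anticipate any genuine obstacle: the argument is essentially bookkeeping, matching degrees carefully so that Gauss exactness can be invoked. The one point that requires a small amount of care is the degree count — it is critical that $z$ has degree at most $N-1$ (not $N$), because otherwise $\dot{z}L_0$ could have degree $2N$ and LG quadrature would no longer be exact. This sharpness of the hypothesis is what makes the identity work exactly rather than approximately, and it is the main subtlety to emphasize in the writeup.
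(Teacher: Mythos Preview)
Your proposal is correct and follows essentially the same route as the paper: integration by parts, the isolation property of $L_0$ at the endpoints and LG nodes, and exactness of $N$-point LG quadrature on the degree-$(2N-1)$ integrand $\dot z(\tau)L_0(\tau)$. Your remark on the sharpness of the degree hypothesis is a nice addition not spelled out in the paper.
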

\begin{proof}
    The left-hand side of \eqref{Integrate f x L0dot} can be integrated by parts as 
    \begin{equation}\label{IbP f x L0dot}\Scale[1]{\ds
        \int_{-1}^{+1} z(\tau) \Dot{L}_0(\tau) \dt{\tau} = z(\tau) L_0(\tau) \Big|_{-1}^{+1} - \int_{-1}^{+1} \Dot{z}(\tau) L_0(\tau)\dt{\tau}. }
    \end{equation}
    Because $z(\tau)$ is a polynomial of degree at most $N-1$, it follows that $\Dot{z}(\tau)$ is a polynomial of degree at most $N-2$. Furthermore, because $L_0(\tau)$ is a polynomial of at most degree $N+1$, then the integrand on the right-hand side of \eqref{IbP f x L0dot} is at most degree $2N-1$. Since LG quadrature is exact for polynomials of degree $2N-1$ or less, the integral on the right-hand side of \eqref{IbP f x L0dot} can be evaluated exactly using LG quadrature as
    \begin{equation}\label{f x L0 quad}
        \int_{-1}^{+1} \Dot{z}(\tau) L_0(\tau) \dt{\tau} = \sum_{i=1}^N w_i \Dot{z}(\tau_i) L_0(\tau_i).
    \end{equation}
    Since the Lagrange polynomials given by \eqref{Lj} satisfy the isolation property
    \begin{equation}\label{isolation_Lj}
    L_j(\tau_i) = \delta_{ij}=\begin{cases}
        1, & i=j, \\[-5pt] 0, & i\neq j,
    \end{cases}
    \end{equation}
    every term $L_0(\tau_i), (i=1,\ldots,N+1)$, is zero which implies that
    \begin{equation}
        \begin{aligned}
            \int_{-1}^{+1} z(\tau) \Dot{L}_0(\tau) \dt{\tau} ={} &  z(+1)L_0(+1) - z(-1)L_0(-1) \\[-5pt]
            ={} & -z(-1).
        \end{aligned}
    \end{equation}
\end{proof}

\begin{corollary}\label{Corollary 1}
    The row vector $\Tilde{\m{D}}_{(0,1:N)}$ obtained from the modified LG differentiation matrix can be written as \\$-\m{L}_{(0,:)} \m{WD}_{(:,1:N)}$, where $\m{L}_{(0,:)}\in\R^N$ is the first row of the differentiation matrix obtained by evaluating the derivatives of the Lagrange basis polynomials in \eqref{Lj} at the LG quadrature points, given by
    \begin{equation}\label{L_matrix}\Scale[1]{
        \m{L} = \begin{bmatrix}
            \Dot{L}_0(\tau_1) & \Dot{L}_0(\tau_2) & \cdots & \Dot{L}_0(\tau_N) \\
            \Dot{L}_1(\tau_1) & \Dot{L}_1(\tau_2) & \cdots & \Dot{L}_1(\tau_N) \\
            \vdots & \vdots & \ddots & \vdots \\
            \Dot{L}_{N+1}(\tau_1) & \Dot{L}_{N+1}(\tau_2) & \cdots & \Dot{L}_{N+1}(\tau_N)
        \end{bmatrix}\in\R^{(N+2)\times N}. }
    \end{equation}
\end{corollary}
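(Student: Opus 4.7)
The plan is to reduce the claimed matrix identity to a column-by-column scalar identity and then apply Theorem~\ref{Theorem 1} with the judicious choice $z(\tau)=\dot{\ell}_j(\tau)$, where $\ell_j$ is the Lagrange basis polynomial with support points $(\tau_0,\tau_1,\ldots,\tau_N)$ used in the standard LG collocation of Section~\ref{sect:LG}. Since $\ell_j$ has degree exactly $N$, the derivative $\dot{\ell}_j$ is of degree at most $N-1$, which is precisely the hypothesis required by Theorem~\ref{Theorem 1}.

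First, I would unpack both sides entry-wise. The $j^{\tx{th}}$ entry of the row vector $\tilde{\m{D}}_{(0,1:N)}$ is $\dot{\ell}_j(\tau_0)$ for $j=1,\ldots,N$, while the $j^{\tx{th}}$ entry of the row vector $-\m{L}_{(0,:)}\m{W}\m{D}_{(:,1:N)}$ is $-\sum_{i=1}^{N} w_i\,\dot{L}_0(\tau_i)\,\dot{\ell}_j(\tau_i)$. It therefore suffices to establish the scalar identity
\[
\dot{\ell}_j(\tau_0) \;=\; -\sum_{i=1}^{N} w_i\,\dot{L}_0(\tau_i)\,\dot{\ell}_j(\tau_i), \qquad (j=1,\ldots,N).
\]

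Second, I would evaluate $\int_{-1}^{+1}\dot{\ell}_j(\tau)\,\dot{L}_0(\tau)\,\dt{\tau}$ in two independent ways and equate them. On one hand, Theorem~\ref{Theorem 1} applied with $z=\dot{\ell}_j$ (degree at most $N-1$) immediately gives the integral equal to $-\dot{\ell}_j(-1)=-\dot{\ell}_j(\tau_0)$. On the other hand, because $L_0$ is built from the $N+2$ support points $(\tau_0,\ldots,\tau_{N+1})$, it is of degree $N+1$, so $\dot{L}_0$ has degree at most $N$; hence the integrand $\dot{\ell}_j\,\dot{L}_0$ has degree at most $(N-1)+N=2N-1$, which is exactly the order of precision of $N$-point LG quadrature. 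Thus the integral also equals $\sum_{i=1}^{N} w_i\,\dot{\ell}_j(\tau_i)\,\dot{L}_0(\tau_i)$. Equating the two expressions yields the required scalar identity for each $j$, which assembled across $j=1,\ldots,N$ is precisely the row-vector identity of the corollary.

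I do not expect a substantial obstacle; the only delicate point is the degree accounting, where the bound $2N-1$ must be tight so that $N$-point LG quadrature is still exact. Both degree estimates are forced by the number of support points used to build $\ell_j$ and $L_0$, so the count is unambiguous. Everything else is direct bookkeeping, and the argument reuses the same integration-by-parts plus quadrature-exactness machinery that proved Theorem~\ref{Theorem 1}.
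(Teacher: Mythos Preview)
Your proposal is correct and follows essentially the same approach as the paper: apply Theorem~\ref{Theorem 1} with $z=\dot{\ell}_j$ to get $\int_{-1}^{+1}\dot{\ell}_j\dot{L}_0\,\dt{\tau}=-\dot{\ell}_j(-1)=-\tilde{\m{D}}_{(0,j)}$, then evaluate the same integral by $N$-point LG quadrature (valid since the integrand has degree at most $2N-1$) to obtain the sum $\sum_{i=1}^N w_i\dot{L}_0(\tau_i)\dot{\ell}_j(\tau_i)$, and equate. Your degree accounting is actually more explicit than the paper's, but the argument is otherwise identical.
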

\begin{proof}
    Replacing $z(\tau)$ from Theorem~\ref{Theorem 1} with $\Dot{\ell}_j(\tau),~(j=1,\ldots,N)$, results in 
    \begin{equation}\label{Integral 1 of elldotj x Ldot0}
        \ds \int_{-1}^{+1} \Dot{\ell}_j(\tau) \Dot{L}_0(\tau) \dt{\tau} = -\Dot{\ell}_j(-1) = -\Tilde{\m{D}}_{(0,j)}.
    \end{equation}
    Furthermore, since $\Dot{\ell}_j(\tau) \Dot{L}_0(\tau)$ is a polynomial of degree at most $2N-1$, the left-hand side of \eqref{Integral 1 of elldotj x Ldot0} can be replaced exactly with an LG quadrature as 
    \begin{equation}\label{Integral 2 of elldotj x Ldot0}
        \int_{-1}^{+1} \Dot{\ell}_j(\tau) \Dot{L}_0(\tau) \dt{\tau} = \sum_{i=1}^N w_i \Dot{\ell}_j(\tau_i) \Dot{L}_0(\tau_i).
    \end{equation}
    Combining \eqref{Integral 1 of elldotj x Ldot0} and \eqref{Integral 2 of elldotj x Ldot0}, $\tilde{\m{D}}_{(0,1:N)}$ can be written as
    \begin{equation}\label{Dtilde0_1thruN}
        \tilde{\m{D}}_{(0,1:N)} = -\m{L}_{(0,:)} \m{WD}_{(:,1:N)}. 
    \end{equation}
\end{proof}

\begin{corollary}\label{Corollary 2}
    Suppose that $( \m{X}_i\intk, \m{V}_i\intk, \m{U}_i\intk ),~(i=0,\ldots,N+1)$ satisfy the collocation constraints given in \eqref{LGdefect} and \eqref{mLG_colloc}. Following the definitions in Corollary~\ref{Corollary 1}, the row vector ${\m{f}_v}_0\intk$ can be written as $-\m{L}_{(0,:)} \m{W}{\m{f}_v}_{1:N}\intk$.
\end{corollary}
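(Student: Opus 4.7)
The plan is to combine the modified LG collocation constraint at $\tau_0$ (the first line of \eqref{mLG_colloc}) with the standard LG defect equations \eqref{LG_DV}, using an extension of Corollary~\ref{Corollary 1} to bridge the two.

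First, I would observe that the row identity established in Corollary~\ref{Corollary 1}, namely $\tilde{\m{D}}_{(0,j)} = -\m{L}_{(0,:)} \m{W} \m{D}_{(:,j)}$, in fact holds for $j=0$ as well. The argument is identical to the existing proof: $\dot{\ell}_0(\tau)$ is a polynomial of degree at most $N-1$, so Theorem~\ref{Theorem 1} applied with $z(\tau)=\dot{\ell}_0(\tau)$ gives $-\dot{\ell}_0(-1) = -\tilde{\m{D}}_{(0,0)}$ on one hand, and $\m{L}_{(0,:)} \m{W} \m{D}_{(:,0)}$ on the other via exact LG quadrature, since $\dot{\ell}_0(\tau)\dot{L}_0(\tau)$ is still of degree at most $2N-1$. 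Alternatively, since $\sum_{j=0}^{N} \ell_j(\tau) \equiv 1$, the row-sum identity $\sum_{j=0}^{N} \tilde{\m{D}}_{(0,j)} = 0$ recovers the $j=0$ column from the $j=1,\ldots,N$ columns already given by Corollary~\ref{Corollary 1}. Either way, the full row identity $\tilde{\m{D}}_{(0,:)} = -\m{L}_{(0,:)} \m{W} \m{D}$ holds.

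Second, I would substitute this row identity into the first line of \eqref{mLG_colloc} to obtain
\[
-\m{L}_{(0,:)} \m{W} \m{D}\, \m{V}_{0:N}\intk = \frac{t_f-t_0}{2}\alpha_k\, {\m{f}_v}_0\intk,
\]
and then stack the $N$ standard collocation constraints \eqref{LG_DV}, for $i=1,\ldots,N$, as $\m{D}\, \m{V}_{0:N}\intk = \frac{t_f-t_0}{2}\alpha_k\, {\m{f}_v}_{1:N}\intk$. Substituting the latter into the former and canceling the common nonzero factor $\frac{t_f-t_0}{2}\alpha_k$ (positive by \eqref{alphak1} and the assumption $t_f>t_0$) yields ${\m{f}_v}_0\intk = -\m{L}_{(0,:)} \m{W}\, {\m{f}_v}_{1:N}\intk$, as claimed.

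There is essentially no serious obstacle to this proof. The only subtlety is that Corollary~\ref{Corollary 1} as stated covers only columns $1,\ldots,N$, so the first step is to justify its extension to the $j=0$ column; once the full row identity $\tilde{\m{D}}_{(0,:)} = -\m{L}_{(0,:)} \m{W} \m{D}$ is in hand, the remainder is a brief matrix substitution using the standard LG defect equation applied to the $\m{v}$-dynamics.
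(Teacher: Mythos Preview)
Your argument is correct and amounts to the same computation as the paper's, just organized differently: the paper applies Theorem~\ref{Theorem 1} directly to the vector polynomial $\m{z}(\tau)=\sum_{j=0}^N \dot{\ell}_j(\tau)\m{V}_j\intk$ and then identifies $\m{z}(-1)$ and $\m{z}(\tau_i)$ with the right-hand sides of \eqref{mLG_colloc} and \eqref{LGdefect}, whereas you first extend Corollary~\ref{Corollary 1} to the $j=0$ column to obtain the full row identity $\tilde{\m{D}}_{(0,:)}=-\m{L}_{(0,:)}\m{W}\m{D}$ and then substitute the stacked defect equations. Both routes invoke Theorem~\ref{Theorem 1} on degree-$(N{-}1)$ polynomials and the exactness of LG quadrature for degree $\le 2N{-}1$, so they are equivalent; your version has the mild advantage of isolating a clean matrix identity that is independent of the particular $\m{V}_{0:N}\intk$.
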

\begin{proof}
    Replacing $z(\tau)$ from Theorem~\ref{Theorem 1} with the vector function $\m{z}(\tau) = \sum_{j=0}^N \Dot{\ell}_j(\tau)\m{V}_j\intk$ results in
    \begin{equation}\label{Integral 1 of Ldot0 x F}
        \ds \int_{-1}^{+1} \Dot{L}_0(\tau) \m{z}(\tau) \dt{\tau} = -\m{z}(-1) = -\frac{t_f-t_0}{2}\alpha_k {\m{f}_v}_0\intk.
    \end{equation}
    Furthermore, since the integrand in \eqref{Integral 1 of Ldot0 x F} is a polynomial of degree at most $2N-1$, it can be replaced exactly with an LG quadrature as 
    \begin{equation}\label{Integral 2 of Ldot0 x F}
        \int_{-1}^{+1} \Dot{L}_0(\tau) \m{z}(\tau) \dt{\tau} = \sum_{i=1}^N w_i \Dot{L}_0(\tau_i)\m{z}(\tau_i),
    \end{equation}
    where $\m{z}(\tau_i)=\sum_{j=0}^N \m{D}_{(i,j)}\m{V}_j\intk$ is equal to the discrete state dynamics of $\m{v}\intk(\tau_i)$ as given by the right-hand side of \eqref{LGdefect}. Combining \eqref{Integral 1 of Ldot0 x F} and \eqref{Integral 2 of Ldot0 x F}, ${\m{f}_v}_0\intk$ can be written as
    \begin{equation}
        {\m{f}_v}_0\intk = -\m{L}_{(0,:)} \m{W}{\m{f}_v}_{1:N}\intk.
    \end{equation}
\end{proof}

Next, the following theorem is introduced that will allow the terms involving ${\m{f}_v}_{N+1}\intk$ and $\tilde{\m{D}}_{(N+1,1:N)}$ in \eqref{Ja} to also be written as functions of $\m{X}_{1:N}\intk,~\m{V}_{1:N}\intk,~\m{U}_{1:N}\intk$, and $\m{D}$.
\begin{theorem}\label{Theorem 2}
    Under the same assumptions of Theorem~\ref{Theorem 1}, it is the case that
    \begin{equation}\label{Integrate f x LN+1dot}
        \int_{-1}^{+1} z(\tau) \Dot{L}_{N+1}(\tau) \dt{\tau} = z(+1).
    \end{equation}
\end{theorem}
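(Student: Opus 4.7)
The plan is to mirror exactly the argument used for Theorem~\ref{Theorem 1}, with the roles of $\tau_0=-1$ and $\tau_{N+1}=+1$ interchanged; the same three ingredients (integration by parts, LG quadrature exactness, and the isolation property of $L_j$) will do all of the work.

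First I would apply integration by parts to the left-hand side of \eqref{Integrate f x LN+1dot} to obtain
\begin{equation*}
\int_{-1}^{+1} z(\tau)\dot{L}_{N+1}(\tau)\,d\tau = z(\tau)L_{N+1}(\tau)\Big|_{-1}^{+1} - \int_{-1}^{+1} \dot{z}(\tau) L_{N+1}(\tau)\,d\tau.
\end{equation*}
The boundary term is the main structural difference from Theorem~\ref{Theorem 1}: because $L_{N+1}$ has support points $(\tau_0,\ldots,\tau_{N+1})$ with $\tau_{N+1}=+1$, the isolation property \eqref{isolation_Lj} gives $L_{N+1}(+1)=1$ and $L_{N+1}(-1)=0$, so the boundary contribution evaluates to $z(+1)$ rather than $-z(-1)$.

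Next I would show that the remaining integral vanishes, by the same degree-counting/quadrature argument as before. Since $z$ has degree at most $N-1$, $\dot{z}$ has degree at most $N-2$, and $L_{N+1}$ has degree at most $N+1$, so the integrand $\dot{z}L_{N+1}$ has degree at most $2N-1$. LG quadrature on the $N$ nodes $(\tau_1,\ldots,\tau_N)$ is exact for such polynomials, so
\begin{equation*}
\int_{-1}^{+1} \dot{z}(\tau)L_{N+1}(\tau)\,d\tau = \sum_{i=1}^N w_i \dot{z}(\tau_i) L_{N+1}(\tau_i),
\end{equation*}
and each $L_{N+1}(\tau_i)=\delta_{i,N+1}=0$ for $i=1,\ldots,N$ by \eqref{isolation_Lj}. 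Combining the boundary term with the vanishing integral yields $z(+1)$, establishing \eqref{Integrate f x LN+1dot}.

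There is essentially no obstacle: the proof is a near-verbatim twin of Theorem~\ref{Theorem 1}. The only point requiring the slightest care is correctly identifying the sign and the surviving boundary value; I would emphasize explicitly that $L_{N+1}(+1)=1$ and $L_{N+1}(-1)=0$ so the reader sees why the result is $+z(+1)$ instead of $-z(-1)$. I would also note that this theorem sets up a corollary analogous to Corollaries~\ref{Corollary 1} and~\ref{Corollary 2}, expressing $\tilde{\m{D}}_{(N+1,1:N)}$ and ${\m{f}_v}_{N+1}\intk$ in terms of the interior LG data $\m{V}_{1:N}\intk$, ${\m{f}_v}_{1:N}\intk$, $\m{W}$, and $\m{D}$, which is the ultimate purpose of the theorem within the paper.
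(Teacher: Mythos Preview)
Your proposal is correct and follows essentially the same approach as the paper's own proof: integration by parts, degree counting to justify LG quadrature exactness on $\dot{z}L_{N+1}$, and the isolation property to kill both the quadrature sum and the unwanted boundary term. The paper presents the steps in the same order and with the same justifications, so there is nothing to add.
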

\begin{proof}
    The left-hand side of \eqref{Integrate f x LN+1dot} can be integrated by parts as 
    \begin{equation}\label{IbP f x LN+1dot}\Scale[1]{\ds
        \int_{-1}^{+1} z(\tau) \Dot{L}_{N+1}(\tau) \dt{\tau} = \ds z(\tau) L_{N+1}(\tau) \Big|_{-1}^{+1} - \int_{-1}^{+1} \Dot{z}(\tau) L_{N+1}(\tau)\dt{\tau}. }
    \end{equation}
    Following the same reasoning used to prove Theorem~\ref{Theorem 1}, the integrand on the right-hand side of \eqref{IbP f x LN+1dot} is at most degree $2N-1$ which can be evaluated exactly using LG quadrature as
    \begin{equation}\label{f x LN+1 quad}
        \int_{-1}^{+1} \Dot{z}(\tau) L_{N+1}(\tau) \dt{\tau} = \sum_{i=1}^N w_i \Dot{z}(\tau_i) L_{N+1}(\tau_i).
    \end{equation}
    Then, due to the isolation property of Lagrange polynomials, every term $L_{N+1}(\tau_i), (i=0,\ldots,N)$, is zero which implies that
    \begin{equation}\Scale[1]{
        \begin{aligned}
            \int_{-1}^{+1} z(\tau) \Dot{L}_{N+1}(\tau) \dt{\tau} ={} & {z(+1)L_{N+1}(+1) - z(-1)L_{N+1}(-1)} \\
            ={} & z(+1).
        \end{aligned}} 
    \end{equation}
\end{proof}

\begin{corollary}\label{Corollary 3}
    The row vector $\Tilde{\m{D}}_{(N+1,1:N)}$ obtained from the modified LG differentiation matrix can be written as $\m{L}_{(N+1,:)} \m{WD}_{(:,1:N)}$, where $\m{L}_{(N+1,:)}\in\R^N$ is the final row of $\m{L}$ given by \eqref{L_matrix}.
\end{corollary}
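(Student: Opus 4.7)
The plan is to mirror the argument used for Corollary 1, substituting Theorem 2 in place of Theorem 1. Specifically, I would replace the scalar polynomial $z(\tau)$ appearing in Theorem 2 with the derivative $\dot{\ell}_j(\tau)$ of the standard LG Lagrange basis polynomial (for $j=1,\ldots,N$), noting that $\dot{\ell}_j(\tau)$ has degree at most $N-1$ so that the hypothesis of Theorem 2 is met.

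First I would apply Theorem 2 directly to obtain
\begin{equation}
    \int_{-1}^{+1} \dot{\ell}_j(\tau)\,\dot{L}_{N+1}(\tau)\,\dt{\tau} \;=\; \dot{\ell}_j(+1) \;=\; \tilde{\m{D}}_{(N+1,j)},
\end{equation}
which identifies the entries of the row $\tilde{\m{D}}_{(N+1,1:N)}$ as boundary values of $\dot{L}_{N+1}$ integrated against the $\dot{\ell}_j$. Next, because the product $\dot{\ell}_j(\tau)\dot{L}_{N+1}(\tau)$ has degree at most $(N-1)+N=2N-1$, LG quadrature is exact on it, so the integral may be rewritten as
\begin{equation}
    \int_{-1}^{+1} \dot{\ell}_j(\tau)\,\dot{L}_{N+1}(\tau)\,\dt{\tau} \;=\; \sum_{i=1}^N w_i\,\dot{\ell}_j(\tau_i)\,\dot{L}_{N+1}(\tau_i).
\end{equation}
Recognizing that $\dot{\ell}_j(\tau_i) = \m{D}_{(i,j)}$ and $\dot{L}_{N+1}(\tau_i) = \m{L}_{(N+1,i)}$, the right-hand side is precisely the $(N+1,j)$-entry (as a row vector in $j$) of the matrix product $\m{L}_{(N+1,:)}\m{W}\m{D}_{(:,1:N)}$.

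Equating the two expressions for the same integral and letting $j$ range over $1,\ldots,N$ yields the claimed identity $\tilde{\m{D}}_{(N+1,1:N)} = \m{L}_{(N+1,:)}\m{W}\m{D}_{(:,1:N)}$. There is no real obstacle here; the only subtlety worth checking is the degree count ensuring exactness of LG quadrature, and the correct assembly of indices so that $\m{L}_{(N+1,:)}\m{W}\m{D}_{(:,1:N)}$ is interpreted as a $1\times N$ row vector. Once those bookkeeping items are verified, the corollary follows immediately, in complete symmetry with the proof of Corollary 1.
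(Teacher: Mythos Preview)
Your proposal is correct and follows essentially the same approach as the paper's own proof: replace $z(\tau)$ in Theorem~\ref{Theorem 2} by $\dot{\ell}_j(\tau)$, apply exactness of LG quadrature to the degree-$(2N-1)$ integrand, and identify the resulting sum with the $j$th entry of $\m{L}_{(N+1,:)}\m{W}\m{D}_{(:,1:N)}$. The argument is a direct mirror of Corollary~\ref{Corollary 1}, exactly as you anticipated.
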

\begin{proof}
    Replacing $z(\tau)$ from Theorem~\ref{Theorem 2} with $\Dot{\ell}_j(\tau),~(j=1,\ldots,N)$, results in 
    \begin{equation}\label{Integral 1 of elldotj x LdotN+1}
        \ds \int_{-1}^{+1} \Dot{\ell}_j(\tau) \Dot{L}_{N+1}(\tau) \dt{\tau} = \Dot{\ell}_j(+1) = \Tilde{\m{D}}_{(N+1,j)}.
    \end{equation}
    Furthermore, since $\Dot{\ell}_j(\tau) \Dot{L}_{N+1}(\tau)$ is a polynomial of degree at most $2N-1$, the left-hand side of \eqref{Integral 1 of elldotj x LdotN+1} can be replaced exactly with an LG quadrature as 
    \begin{equation}\label{Integral 2 of elldotj x LdotN+1}
        \int_{-1}^{+1} \Dot{\ell}_j(\tau) \Dot{L}_{N+1}(\tau) \dt{\tau} = \sum_{i=1}^N w_i \Dot{\ell}_j(\tau_i) \Dot{L}_{N+1}(\tau_i).
    \end{equation}
    Combining \eqref{Integral 1 of elldotj x LdotN+1} and \eqref{Integral 2 of elldotj x LdotN+1}, $\tilde{\m{D}}_{(N+1,1:N)}$ can be written as
    \begin{equation}\label{DtildeN+1_1thruN}
        \tilde{\m{D}}_{(N+1,1:N)} = \m{L}_{(N+1,:)} \m{WD}_{(:,1:N)}. 
    \end{equation}
\end{proof}

\begin{corollary}\label{Corollary 4}
    Under the assumptions and definitions of Theorems~\ref{Theorem 1}-\ref{Theorem 2} and Corollaries~\ref{Corollary 1}-\ref{Corollary 3}, the row vector ${\m{f}_v}_{N+1}\intk$ can be written as $\m{L}_{(N+1,:)} \m{W}{\m{f}_v}_{1:N}\intk$.
\end{corollary}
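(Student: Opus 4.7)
The plan is to mirror the proof of Corollary~\ref{Corollary 2} almost verbatim, but using Theorem~\ref{Theorem 2} in place of Theorem~\ref{Theorem 1} so that the sign flips and the endpoint becomes $\tau = +1$ instead of $\tau = -1$. The key observation is that everything needed is already in place: the terminal collocation constraint in \eqref{mLG_colloc} ties $\dot{\hat{\m{v}}}\intk(+1)$ to $\frac{t_f-t_0}{2}\alpha_k{\m{f}_v}_{N+1}\intk$, and the standard LG collocation constraint in \eqref{LGdefect} ties $\dot{\hat{\m{v}}}\intk(\tau_i)$ to $\frac{t_f-t_0}{2}\alpha_k{\m{f}_v}_i\intk$ at the interior Gauss nodes.

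First I would introduce the vector-valued test function $\m{z}(\tau) = \sum_{j=0}^N \dot{\ell}_j(\tau)\m{V}_j\intk = \dot{\hat{\m{v}}}\intk(\tau)$, which is a (row-vector) polynomial of degree at most $N-1$ since $\hat{\m{v}}\intk(\tau)$ has degree at most $N$. This is exactly the degree hypothesis required in Theorem~\ref{Theorem 2}, applied componentwise. Invoking Theorem~\ref{Theorem 2} componentwise yields
\begin{equation}
    \int_{-1}^{+1} \dot{L}_{N+1}(\tau)\,\m{z}(\tau)\,\dt{\tau} = \m{z}(+1) = \frac{t_f-t_0}{2}\alpha_k{\m{f}_v}_{N+1}\intk,
\end{equation}
where the second equality uses the terminal-endpoint collocation constraint from \eqref{mLG_colloc}.

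Next I would note that the integrand $\dot{L}_{N+1}(\tau)\m{z}(\tau)$ has degree at most $(N+1-1)+(N-1) = 2N-1$, so LG quadrature evaluates it exactly:
\begin{equation}
    \int_{-1}^{+1} \dot{L}_{N+1}(\tau)\,\m{z}(\tau)\,\dt{\tau} = \sum_{i=1}^N w_i\,\dot{L}_{N+1}(\tau_i)\,\m{z}(\tau_i).
\end{equation}
At each Gauss node, $\m{z}(\tau_i) = \sum_{j=0}^N \m{D}_{(i,j)}\m{V}_j\intk = \frac{t_f-t_0}{2}\alpha_k{\m{f}_v}_i\intk$ by the standard LG defect constraint \eqref{LGdefect}. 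Equating the two expressions for the integral, cancelling the common scalar factor $\frac{t_f-t_0}{2}\alpha_k$, and recognizing the row-matrix product $\m{L}_{(N+1,:)}\m{W}{\m{f}_v}_{1:N}\intk = \sum_{i=1}^N w_i\dot{L}_{N+1}(\tau_i){\m{f}_v}_i\intk$ yields the claim.

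There is no real obstacle here, the proof is essentially a symmetric copy of Corollary~\ref{Corollary 2}. The only things to watch are the sign (Theorem~\ref{Theorem 2} gives $+z(+1)$ whereas Theorem~\ref{Theorem 1} gave $-z(-1)$, which is why the result has no leading minus sign), the degree count for the exactness of LG quadrature, and the fact that the argument must be applied componentwise since $\m{f}_v$ is vector-valued—each of the $n_v$ scalar components of ${\m{f}_v}_{N+1}\intk$ follows from the scalar Theorem~\ref{Theorem 2} applied to the corresponding component of $\m{z}(\tau)$.
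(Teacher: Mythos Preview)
Your proposal is correct and follows essentially the same approach as the paper's own proof: both apply Theorem~\ref{Theorem 2} to the vector function $\m{z}(\tau)=\sum_{j=0}^N \dot{\ell}_j(\tau)\m{V}_j\intk$, use the terminal collocation constraint \eqref{mLG_colloc} to identify $\m{z}(+1)$ with $\frac{t_f-t_0}{2}\alpha_k{\m{f}_v}_{N+1}\intk$, replace the integral by its exact LG quadrature, and substitute the defect constraints \eqref{LGdefect} at the Gauss nodes. Your write-up is slightly more explicit about the degree count and the componentwise application, but the argument is the same.
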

\begin{proof}
    Replacing $z(\tau)$ from Theorem~\ref{Theorem 2} with the vector function $\m{z}(\tau) = \sum_{j=0}^N \Dot{\ell}_j(\tau)\m{V}_j\intk$ results in
    \begin{equation}\label{Integral 1 of LdotN+1 x F}
        \ds \int_{-1}^{+1} \Dot{L}_{N+1}(\tau) \m{z}(\tau) \dt{\tau} = \m{z}(+1) = \frac{t_f-t_0}{2}\alpha_k {\m{f}_v}_{N+1}\intk.
    \end{equation}
    Furthermore, since the integrand in \eqref{Integral 1 of LdotN+1 x F} is a polynomial of degree at most $2N-1$, it can be replaced exactly with an LG quadrature as 
    \begin{equation}\label{Integral 2 of LdotN+1 x F}
        \int_{-1}^{+1} \Dot{L}_{N+1}(\tau) \m{z}(\tau) \dt{\tau} = \sum_{i=1}^N w_i \Dot{L}_{N+1}(\tau_i)\m{z}(\tau_i),
    \end{equation}
    where $\m{z}(\tau_i)$ is again equal to the discrete state dynamics of $\m{v}\intk(\tau_i)$ as given by the right-hand side of \eqref{LGdefect}. Combining \eqref{Integral 1 of LdotN+1 x F} and \eqref{Integral 2 of LdotN+1 x F}, ${\m{f}_v}_{N+1}\intk$ can be written as
    \begin{equation}
        {\m{f}_v}_{N+1}\intk = \m{L}_{(N+1,:)} \m{W}{\m{f}_v}_{1:N}\intk. 
    \end{equation}
\end{proof}

The results of Theorems~\ref{Theorem 1}-\ref{Theorem 2} can be substituted into the Lagrangian of \eqref{Ja}, and then the KKT conditions are found by setting equal to zero the derivatives of the Lagrangian with respect to $\m{X}_{0:N+1}\intk$, $\m{V}_{0:N+1}\intk$, $\m{U}_{1:N}\intk$, ${\g{\Lambda}_x}_{1:N+1}\intk$, ${\g{\Lambda}_v}_{1:N+1}\intk$, ${\tilde{\g{\Lambda}}_v}{}_{0}\intk$, ${\tilde{\g{\Lambda}}_v}{}_{N+1}\intk$, $\g{\Psi}$, $\Theta$, $\alpha_k$, $t_0$, and $t_f$. Along with the conditions given by \eqref{LG_DX}-\eqref{LG_boundary},\eqref{mLG_colloc}, and \eqref{alphak1}, the solution to the NLP of the modified LG collocation method must satisfy the following KKT conditions:
\begin{align}\label{KKT_start}
    \m{D}_{(:,i)}\T{\g{\Lambda}_x}_{1:N}\intk &= \ds\frac{t_f-t_0}{2}\alpha_k \nabla_{\m{X}_i}\left( w_i \Bar{\C{H}}_i\intk \right), \\[10pt] 
    \m{D}_{(:,i)}\T{\g{\Lambda}_v}_{1:N}\intk &= \frac{t_f-t_0}{2}\alpha_k \nabla_{\m{V}_i}\left( w_i \Bar{\C{H}}_i\intk \right) 
    + \m{L}_{(0,:)}\m{WD}_{(:,i)} {\tilde{\g{\Lambda}}_v}{}_0\intk 
    - \m{L}_{(N+1,:)}\m{WD}_{(:,i)} {\tilde{\g{\Lambda}}_v}{}_{N+1}\intk, \\[5pt]
    \m{0} &= \frac{t_f-t_0}{2}\alpha_k \nabla_{\m{U}_i}\left( w_i \Bar{\C{H}}_i\intk \right), \\[10pt]
    \Scale[0.925]{ {\g{\Lambda}_x}_{N+1}\intk - \m{D}_{(:,0)}\T{\g{\Lambda}_x}_{1:N}\intk } &=  \Scale[0.925]{ \delta_{1k}\left[-\nabla_{\m{X}_0}\C{M} + \nabla_{\m{X}_0}\left(\g{\Psi}\m{b}\T\right)\right] 
    + (1-\delta_{1k}){\g{\Lambda}_x}_{N+1}^{(k-1)} },  \label{KKT_x0}\\[10pt]
    \Scale[0.925]{ {\g{\Lambda}_v}_{N+1}\intk - \m{D}_{(:,0)}\T{\g{\Lambda}_v}_{1:N}\intk } &= \Scale[0.925]{ \delta_{1k}\left[-\nabla_{\m{V}_0}\C{M} + \nabla_{\m{V}_0}\left(\g{\Psi}\m{b}\T\right)\right] 
    + (1-\delta_{1k}){\g{\Lambda}_v}_{N+1}^{(k-1)} 
    + \Tilde{\m{D}}_{(0,0)} {\tilde{\g{\Lambda}}_v}{}_0\intk + \Tilde{\m{D}}_{(N+1,0)} {\tilde{\g{\Lambda}}_v}{}_{N+1}\intk },  \label{KKT_v0}\\[10pt]
    {\g{\Lambda}_x}_{N+1}\intk &=\Scale[0.955]{ \delta_{Kk} \left[ \nabla_{\m{X}_{N+1}}\C{M} - \nabla_{\m{X}_{N+1}}\left( \g{\Psi}\m{b}\T \right) \right] } 
    + (1-\delta_{Kk})\left[ {\g{\Lambda}_x}_{N+1}^{(k+1)} - \m{D}_{(:,0)}\T{\g{\Lambda}_x}_{1:N}^{(k+1)} \right]    , \label{KKT_xf}\\[10pt]
    \begin{split}
        {\g{\Lambda}_v}_{N+1}\intk &= \Scale[0.955]{ \delta_{Kk} \left[ \nabla_{\m{V}_{N+1}}\C{M} - \nabla_{\m{V}_{N+1}} \left( \g{\Psi}\m{b}\T \right) \right] }\\ 
        &\hspace{10pt}\Scale[1]{ + (1-\delta_{Kk}) \left[ {\g{\Lambda}_v}_{N+1}^{(k+1)} - \m{D}_{(:,0)}\T{\g{\Lambda}_v}_{1:N}^{(k+1)} - \Tilde{\m{D}}_{(0,0)} {\tilde{\g{\Lambda}}_v}{}_0^{(k+1)} - \Tilde{\m{D}}_{(N+1,0)} {\tilde{\g{\Lambda}}_v}{}_{N+1}^{(k+1)} \right]    }, 
    \end{split} \label{KKT_vf}\\[10pt]
    \frac{1}{2}\sum_{k=1}^K \alpha_k \sum_{i=1}^N w_i \Bar{\C{H}}_i\intk &= \nabla_{t_0}\C{M} - \nabla_{t_0}\left(\g{\Psi}\m{b}\T\right), \\[10pt]
    \frac{1}{2}\sum_{k=1}^K \alpha_k \sum_{i=1}^N w_i \Bar{\C{H}}_i\intk &= -\nabla_{t_f}\C{M} + \nabla_{t_f}\left(\g{\Psi}\m{b}\T\right), \\[10pt]
    \Theta &= \frac{t_f-t_0}{2}\sum_{i=1}^N w_i \Bar{\C{H}}_i\intk, \label{KKT_end}
\end{align}
where $\delta_{ij}$ is the Kronecker delta function defined as
\begin{equation}
    \delta_{ij} = \begin{cases}
        1, & i=j, \\[2pt]
        0, & i\neq j,
    \end{cases}
\end{equation}
and $\Bar{\C{H}}_i\intk$ is the discrete-time augmented Hamiltonian defined as
\begin{equation}\label{discreteHamiltonian}\Scale[0.87]{
    \begin{aligned}
        \Bar{\C{H}}_i\intk ={} & \C{L}\left(\m{X}_i\intk,\m{V}_i\intk,\m{U}_i\intk\right) + \left\langle \frac{{\g{\Lambda}_x}_i\intk}{w_i} + {\g{\Lambda}_x}_{N+1}\intk, {\m{f}_x}_i\intk \right\rangle 
        + \left\langle \frac{{\g{\Lambda}_v}_i\intk}{w_i} + {\g{\Lambda}_v}_{N+1}\intk - \m{L}_{(0,i)} \tilde{\g{\Lambda}}_{v_0}\intk + \m{L}_{(N+1,i)} \tilde{\g{\Lambda}}_{v_{N+1}}\intk, {\m{f}_v}_i\intk \right\rangle
    \end{aligned}}
\end{equation}
for $(i=1,\ldots,N)$ and $(k=1,\ldots,K)$. Note that the KKT condition given by \eqref{KKT_end} is unique to the modified LG method and is not required for an extremal solution of the standard LG transcription.

\subsection{Transformed Adjoint System}\label{subsect: Transformed Adjoint}
The transformed adjoint variables in the $k^{\tx{th}}$ interval, $k\in\{1,\ldots,K\}$, corresponding to the modified LG collocation method can now be expressed as follows:
\begin{align}
    {\g{\lambda}_x}_0\intk & = {\g{\Lambda}_x}_{N+1}\intk - \m{D}_{(:,0)}\T{\g{\Lambda}_x}_{1:N}\intk, \label{lambdax0}\\
    {\g{\lambda}_x}_{1:N}\intk &= \m{W}\inv {\g{\Lambda}_x}_{1:N}\intk + \m{1}{\g{\Lambda}_x}_{N+1}\intk, \label{lambdaxi}\\
    {\g{\lambda}_x}_{N+1}\intk & = {\g{\Lambda}_x}_{N+1}\intk, \label{lambdaxf}\\
    {\g{\lambda}_v}_0\intk & = \Scale[1]{ {\g{\Lambda}_v}_{N+1}\intk - \m{D}_{(:,0)}\T{\g{\Lambda}_v}_{1:N}\intk - \Tilde{\m{D}}_{(0,0)} \tilde{\g{\Lambda}}_v{}_0\intk - \Tilde{\m{D}}_{(N+1,0)} \tilde{\g{\Lambda}}_v{}_{N+1}\intk  }, \label{lambdav0}\\
    {\g{\lambda}_v}_{1:N}\intk &= \Scale[1]{ \m{W}\inv {\g{\Lambda}_v}_{1:N}\intk + \m{1}{\g{\Lambda}_v}_{N+1}\intk - \m{L}_{(0,:)}\T \tilde{\g{\Lambda}}_v{}_0\intk + \m{L}_{(N+1,:)}\T \tilde{\g{\Lambda}}_v{}_{N+1}\intk  }, \label{lambdavi}\\ 
    {\g{\lambda}_v}_{N+1}\intk & = {\g{\Lambda}_v}_{N+1}\intk, \label{lambdavf}\\ 
    \g{\psi} &= \g{\Psi}. \label{psi}
\end{align}
It can be seen that the expressions for the costate estimates corresponding to those components of the state that do not explicitly depend on control (\eqref{lambdax0}-\eqref{lambdaxf} and \eqref{psi}) are identical to the expressions derived in the standard LG collocation scheme \cite{BensonRao2006}. The costate estimates corresponding to those components of the state that do explicitly depend on control (in particular, \eqref{lambdav0} and \eqref{lambdavi}) consist of the standard expressions plus additional terms related to the endpoint collocation constraints. 

Finally, let $\m{D}\sdag$ be the $N\times(N+1)$ matrix derived in \cite{Benson2005} given by
\begin{equation}\label{Ddag}
    \begin{array}{rcl}
        \m{D}\sdag_{(i,j)} & = & \ds -\frac{w_j}{w_i}\m{D}_{(j,i)}, \\[10pt]
        \m{D}\sdag_{(i,N+1)} & = & \ds \sum_{i=1}^N \frac{w_j}{w_i}\m{D}_{(j,i)},
    \end{array}\quad (i,j=1,\ldots,N).  
\end{equation}
Substituting the costate estimates of \eqref{lambdax0}-\eqref{psi} and the differentiation matrix $\m{D}\sdag$ of \eqref{Ddag} into the KKT conditions given by \eqref{KKT_start}-\eqref{KKT_end}, the transformed adjoint system is given by 
\begin{align}\label{TransformedAdjoint_start}
    \m{D}\sdag_{(i,1:N+1)} {\g{\lambda}_x}_{1:N+1}\intk &= -\frac{t_f-t_0}{2}\alpha_k \nabla_{\m{X}_i} \Bar{\C{H}}_i\intk, \\[2pt] 
    \m{D}\sdag_{(i,1:N+1)} {\g{\lambda}_v}_{1:N+1}\intk &= -\frac{t_f-t_0}{2}\alpha_k \nabla_{\m{V}_i} \Bar{\C{H}}_i\intk, \\[2pt] 
    \m{0} &= \frac{t_f-t_0}{2}\alpha_k \nabla_{\m{U}_i} \Bar{\C{H}}_i\intk, \\[2pt] 
    {\g{\lambda}_x}_0\intO & = -\nabla_{\m{X}_0}\C{M} + \nabla_{\m{X}_0}\left(\g{\psi}\m{b}\T\right), \\[2pt] 
    {\g{\lambda}_v}_0\intO & = -\nabla_{\m{V}_0}\C{M} + \nabla_{\m{V}_0}\left(\g{\psi}\m{b}\T\right), \\[2pt] 
    {\g{\lambda}_x}_{N+1}\intK & = \nabla_{\m{X}_{N+1}}\C{M} - \nabla_{\m{X}_{N+1}}\left(\g{\psi}\m{b}\T\right), \\[2pt] 
    {\g{\lambda}_v}_{N+1}\intK & = \nabla_{\m{V}_{N+1}}\C{M} - \nabla_{\m{V}_{N+1}}\left(\g{\psi}\m{b}\T\right), \\[2pt] 
    \frac{1}{2}\sum_{k=1}^K \alpha_k \sum_{i=1}^N w_i \Bar{\C{H}}_i\intk & = \nabla_{t_0}\C{M} - \nabla_{t_0}\left(\g{\psi}\m{b}\T\right), \\[2pt] 
    \frac{1}{2}\sum_{k=1}^K \alpha_k \sum_{i=1}^N w_i \Bar{\C{H}}_i\intk & = -\nabla_{t_f}\C{M} + \nabla_{t_f}\left(\g{\psi}\m{b}\T\right), \label{TransformedAdjoint_end}
\end{align}
such that \eqref{TransformedAdjoint_start}-\eqref{TransformedAdjoint_end} are the discrete approximations of the continuous first-order optimality conditions from \eqref{FirstOrderOptimality_start}-\eqref{FirstOrderOptimality_end}. In fact, the discrete and continuous conditions have exactly the same structure. For $(k=2,\ldots,K-1)$, continuity in the costate is maintained at the internal mesh points through the Kronecker delta function that appears in \eqref{KKT_x0}-\eqref{KKT_vf}. Similar to the standard LG collocation method, $\m{D}$ and $\tilde{\m{D}}$ operate on polynomial values $z(\tau_i),~0\leq i \leq N$, while $\m{D}\sdag$ operates on polynomial values $z(\tau_j),~1 \leq j \leq N+1$, where $z(\tau)$ is any polynomial of degree at most $N$. Thus, the transformed KKT conditions have been shown to be related to a discretization of the continuous first-order optimality conditions. Furthermore, the transformed adjoint system of the modified LG collocation method uses the same full-rank differentiation matrix as the transformed adjoint system of the standard LG collocation method \cite{GargRao2010}, thus implying that both the standard and modified LG collocation methods have full-rank transformed adjoint systems.

\subsection{Costate Estimate of the Motivating Example}\label{subsect:CostateExample}


The motivating example of Section~\ref{subsect:Motivating Example} is revisited again to demonstrate the costate estimate derived in Section~\ref{subsect: Transformed Adjoint} and compare it with the costate estimate associated with standard LG collocation \cite{BensonRao2006,GargRao2010}. For comparison, the example problem when solved with standard LG collocation is transcribed using fixed switch-times $T_1=T_1^*$ and $T_2=T_2^*$. As shown in Fig.~\ref{fig:Costate plot v1}, the costate approximation for $\lambda_v$ from the modified LG collocation scheme is in close agreement with the analytic solution. It can be noted that the optimal control law for the example problem is given by
\begin{equation}
    u^*(T) = -\tx{sgn}(\lambda_v^*(T)) \cdot u_M,
\end{equation}
where $u^*(T)$ is computed using the weak form of Pontryagin's minimum principle. Thus, it is known that the optimal control solution switches from one control limit to the other whenever $\lambda_v(T)$ switches sign, i.e. any instant where $\lambda_v(T)=0$. The modified LG costate approximation in Fig.~\ref{fig:Costate plot v1} is representative of the optimal control law since it is zero at $T_1$ and $T_2$. While the costate approximation obtained using standard LG collocation follows the same trend as the true costate solution, it is slightly perturbed from the optimal trajectory. In fact, the solution obtained using standard LG collocation does not satisfy the first-order necessary conditions for optimality since $\lambda_v(T_1)\neq 0$ and $\lambda_v(T_2)\neq 0$. The costate estimates corresponding to $\lambda_{x_1}(T)$ and $\lambda_{x_2}(T)$ exhibit similar behavior, with the costate estimates belonging to the modified LG scheme being more accurate than the estimates belonging to the standard LG scheme. 

\begin{figure}[H]
    \centering
    \includegraphics[width=0.75\textwidth]{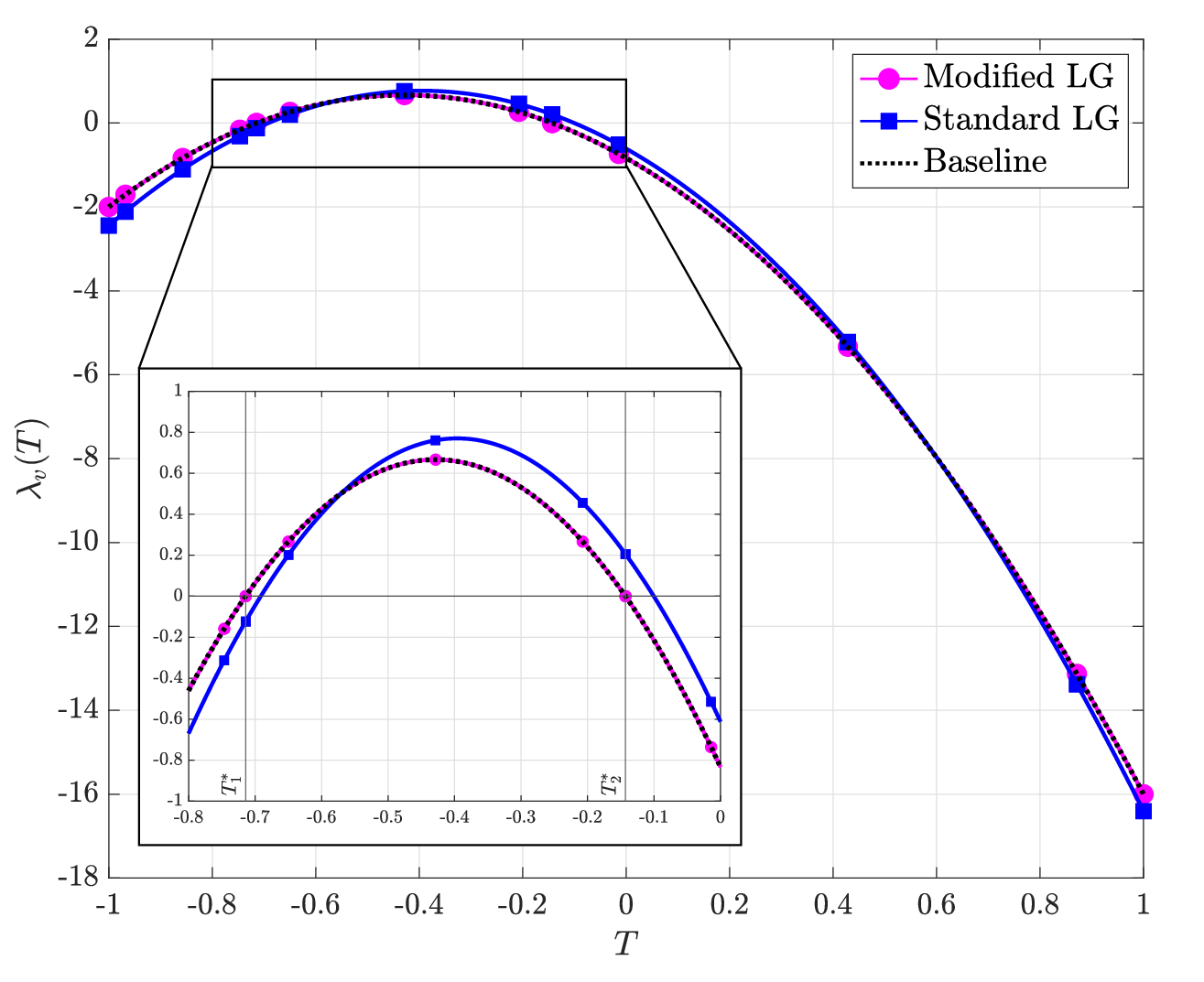}
    \caption{Costate approximation for $\lambda_v$ of the example problem.}
    \label{fig:Costate plot v1}
\end{figure}

\subsection{Weierstrass-Erdmann Conditions}
When the optimal control is discontinuous, continuity of the Hamiltonian at the location of the control discontinuity must be satisfied. That is, $\C{H}(T_s^-)=\C{H}(T_s^+)$ where $\C{H}(T_s^-)$ and $\C{H}(T_s^+)$ are the left-hand and right-hand limits of the Hamiltonian $\C{H}$ at a point $T_s$ of discontinuity in the control. This necessary optimality condition is known as one of the Weierstrass-Erdmann conditions \cite{BrysonHo1975}. Fig.~\ref{fig:Hamiltonian} depicts the approximation to the Hamiltonian for the solutions to the motivating example obtained using both standard LG collocation and modified LG collocation. As in Section~\ref{subsect:CostateExample}, the standard LG transcription of the example problem uses switch-times fixed to their optimal values while the modified LG transcription includes the switch-times as variables in the NLP. Since the Hamiltonian of the example problem is not an explicit function of time, it should be constant. The costate mapping of the standard LG collocation scheme results in a Hamiltonian approximation that is constant across each mesh interval but is not constant at the control discontinuity locations. The costate mapping of the modified LG collocation scheme corrects this discrepancy and is shown to be constant across the entire time domain as necessitated for optimality.

\begin{figure}[h]
    \centering
    \includegraphics[width=0.7\textwidth]{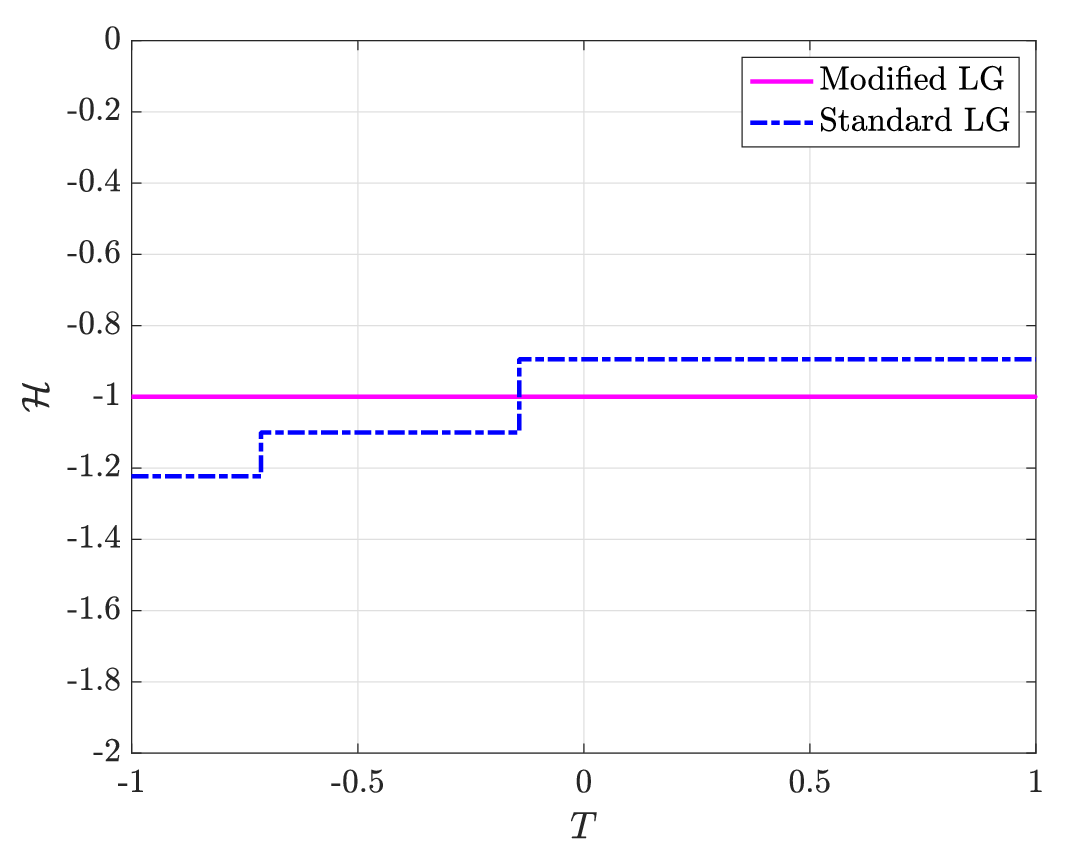}
    \caption{Hamiltonian for the standard and modified LG collocation methods used to solve the example problem.}
    \label{fig:Hamiltonian}
\end{figure}

\section[Comparison with Existing Gaussian Quadrature Collocation]{Comparison with Existing Gaussian Quadrature Collocation\\Methods}

The method developed in this paper is now compared against the standard Gaussian quadrature collocation methods --- the Legendre-Gauss collocation method \cite{BensonRao2006,GargRao2010,GargRao2011}, the Legendre-Gauss-Radau collocation method \cite{GargRao2009, GargRao2010, GargRao2011}, and the Legendre-Gauss-Lobatto collocation method \cite{FahrooRoss2000,FahrooRoss2001,RossFahroo2004} --- as well as the modified Legendre-Gauss-Radau collocation method \cite{EideRao2021}. For all the results shown, each collocation method is implemented on the motivating example of Section~\ref{subsect:Motivating Example} with $N$ quadrature points in each of the $K=3$ intervals and $\{T_1,T_2\}$ are included as free variables in the resulting NLP. Once again, numerical results are obtained using the nonlinear optimization software IPOPT \cite{BieglerZavala2009} set to a NLP tolerance of $\epsilon=10^{-6}$. Results are not included for the LGL method when $N=3$ in each interval because at least four Lagrange polynomial support points are required in order to approximate the piecewise-cubic polynomial $x_1\intk(\tau),~(k=1,2,3),$ but the LGL state approximation only utilizes the quadrature points as support points (whereas both the LG and LGR methods introduce one additional support point at one of the interval endpoints).


Fig.~\ref{fig: State and Costate Errors} shows how the maximum observed relative state and costate errors (among all the state and costate components, respectively) vary as the number of quadrature points in each interval is increased. To reiterate, each of the three intervals are formulated with the same number of quadrature points. It is observed in Fig.~\ref{subfig: State Error} that the modified LG and LGL methods consistently computed state approximations that met the NLP tolerance of $\epsilon=10^{-6}$. It was these methods that also consistently converged to the correct switch-times, enabling the state to be approximated to a high accuracy as piecewise polynomials. On the contrary, both the standard LG and standard LGR methods were typically unable to accurately compute the switch-times and, as a result, obtained state approximations that exhibited relatively large magnitudes of error on the order of $10^{-1}$ and $10^{-2}$; the standard LG collocation method when $N=9$ was the one exception to this, depicted by its maximum relative state error on the order of $10^{-6}$. The modified LGR method exhibits an interesting behavior in which it typically performs well when the number of quadrature points in each interval is even-valued but performs poorly when $N$ is odd-valued.

\begin{figure}[h!]
    \centering
    \begin{subfigure}{0.475\textwidth}
        \centering
        \includegraphics[width=\linewidth]{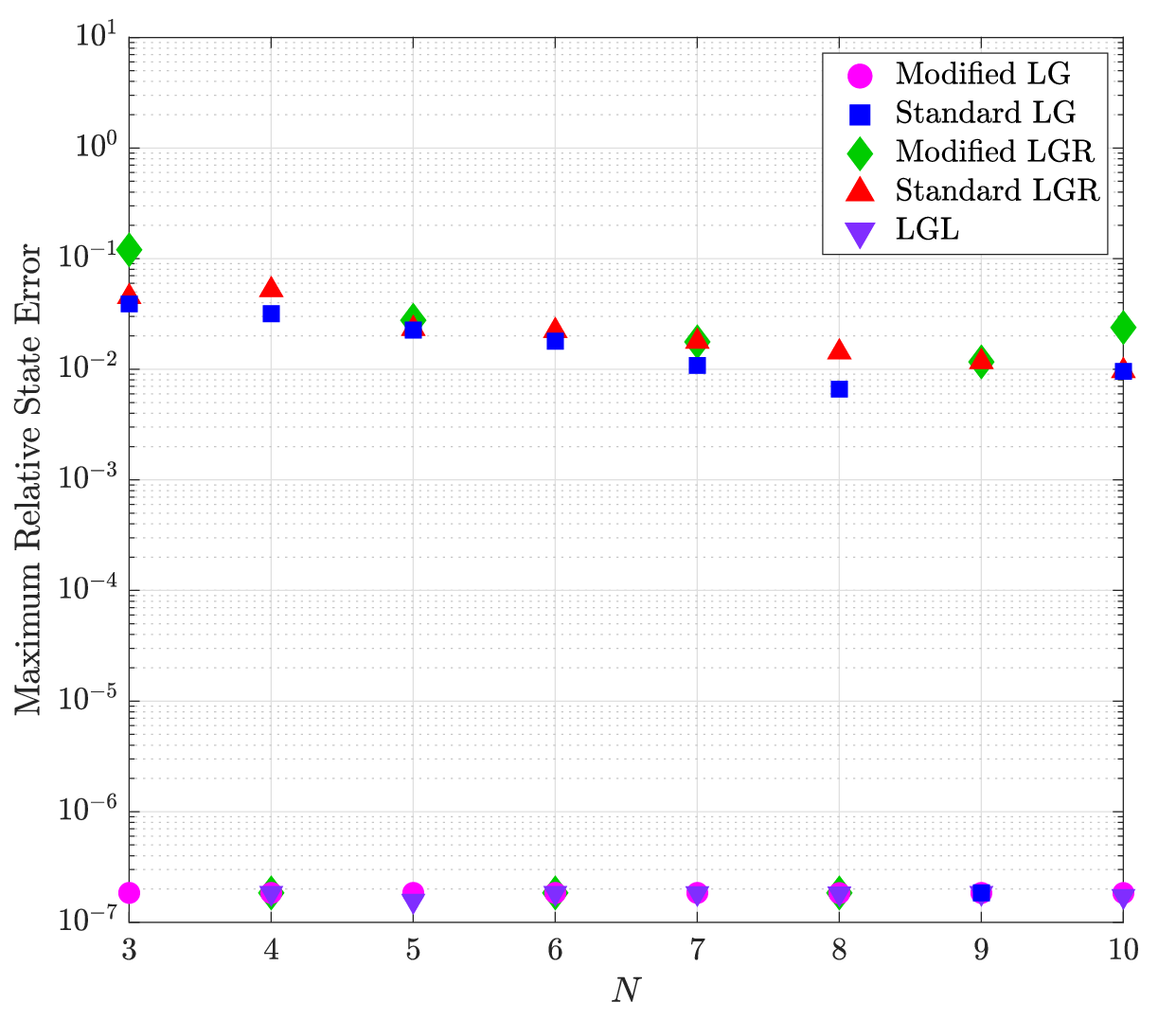}
        \caption{Maximum relative state error (among all state components) observed across entire mesh as a function of $N$.}\label{subfig: State Error}
    \end{subfigure}%
    ~ 
    \begin{subfigure}{0.475\textwidth}
        \centering
        \includegraphics[width=\linewidth]{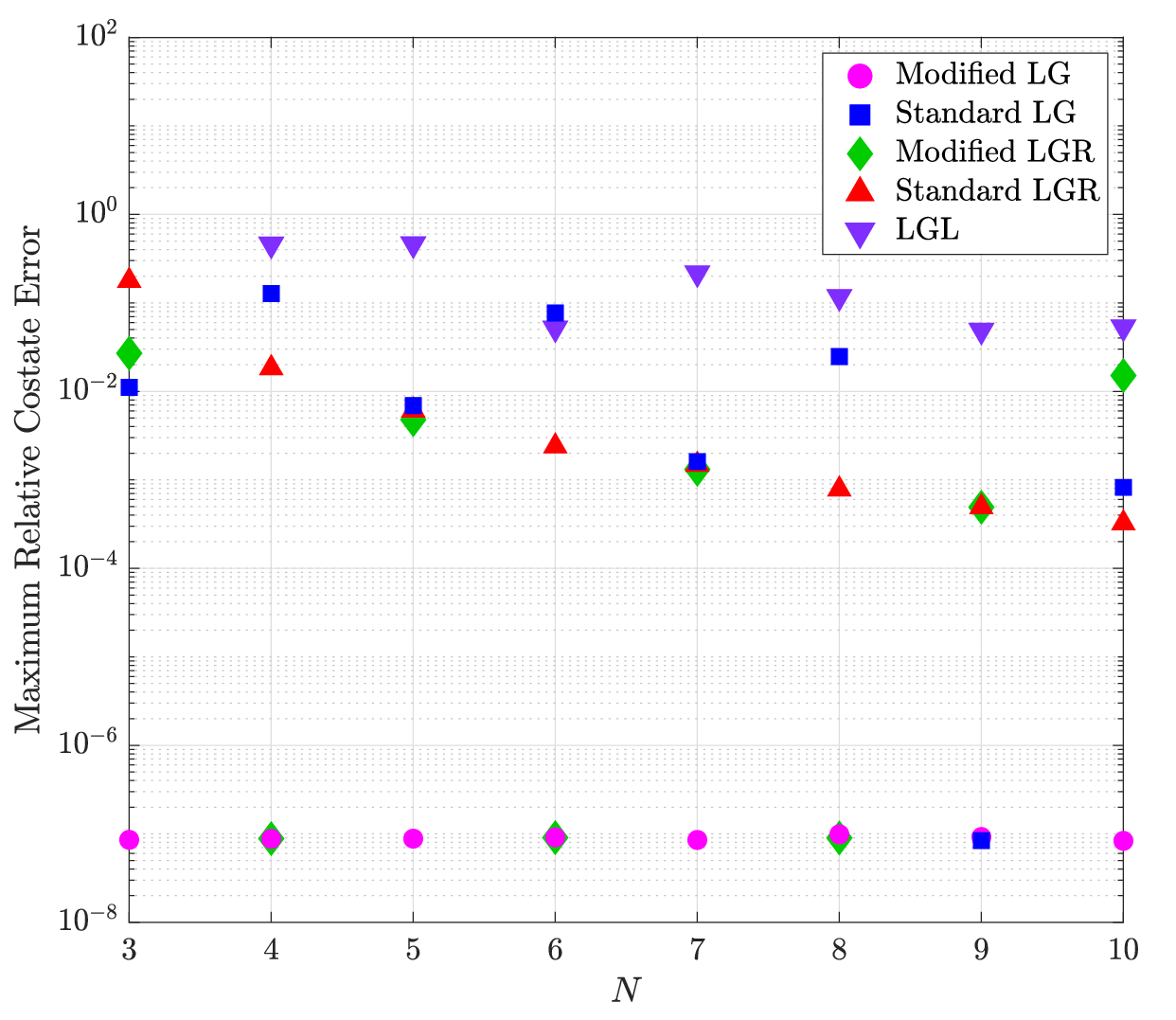}
        \caption{Maximum relative costate error (among all costate components) observed across entire mesh as a function of $N$.}\label{subfig: Costate Error}
    \end{subfigure}
    \caption{Maximum relative state and costate errors observed across entire mesh as the number of quadrature points in each interval, $N$, is increased.}\label{fig: State and Costate Errors}
\end{figure}

The error in the costate approximation follows a similar trend as the relative state error with the exception of the LGL collocation scheme, as shown in Fig.~\ref{subfig: Costate Error}. It is noted that the LGL costate approximation used to obtain a relative error for Fig.~\ref{subfig: Costate Error} has been computed using the post-processing method of Ref.~\cite{FahrooRoss2000}, where Ref.~\cite{FahrooRoss2000} is an enhancement of the method in Ref.~\cite{FahrooRoss2001} in which the LGL collocation method was developed. It has been shown that the transformed adjoint systems of the LG and LGR collocation schemes are full-rank whereas the LGL transformed adjoint system is rank-deficient \cite{GargRao2010}, leading to oscillatory behavior in the costate estimate that must be treated using a post-processing technique \cite{FahrooRoss2000,FahrooRoss2001}. The LGL costate post-processing method of Ref.~\cite{FahrooRoss2000} solves a {\em secondary} indirect collocation problem, essentially a root-finding problem. The converged state and control approximations from the direct collocation problem are treated as fixed values, and the oscillatory costate estimate is used as an initial guess for the root-finder. Although the LGL collocation scheme obtains accurate approximations to the state, control, and switch-times for $N\leq 10$, the trend in the relative costate error of the LGL collocation method suggests that a better costate approximation will be obtained as $N$ continues to be increased. Even so, convergence of the LGL costate is highly influenced by the post-processing technique used. Although using a larger number of quadrature points in each interval is possible, an appeal of using pseudospectral methods lies in their ability to obtain accurate solutions using as few points as possible. Lastly, Fig.~\ref{subfig: Costate Error} shows that the costate approximations of the modified LG and modified LGR collocation methods achieve high accuracy without requiring post-processing in the cases where the switch-times are also accurately computed.


Finally, Fig.~\ref{fig:tf vs N} shows how the objective cost, $t_f$, is affected by the collocation method used and the number of quadrature points in each interval. Only the cases in which both switch-times were accurately computed resulted in objective values equal to $t_f^*=7$. For almost all the cases in which one or both of the switch-times were not accurately computed, the objective value was less than $t_f^*$, a result of the Lavrentiev phenomenon.

\begin{figure}[h!]
    \centering
    \includegraphics[width=0.725\textwidth]{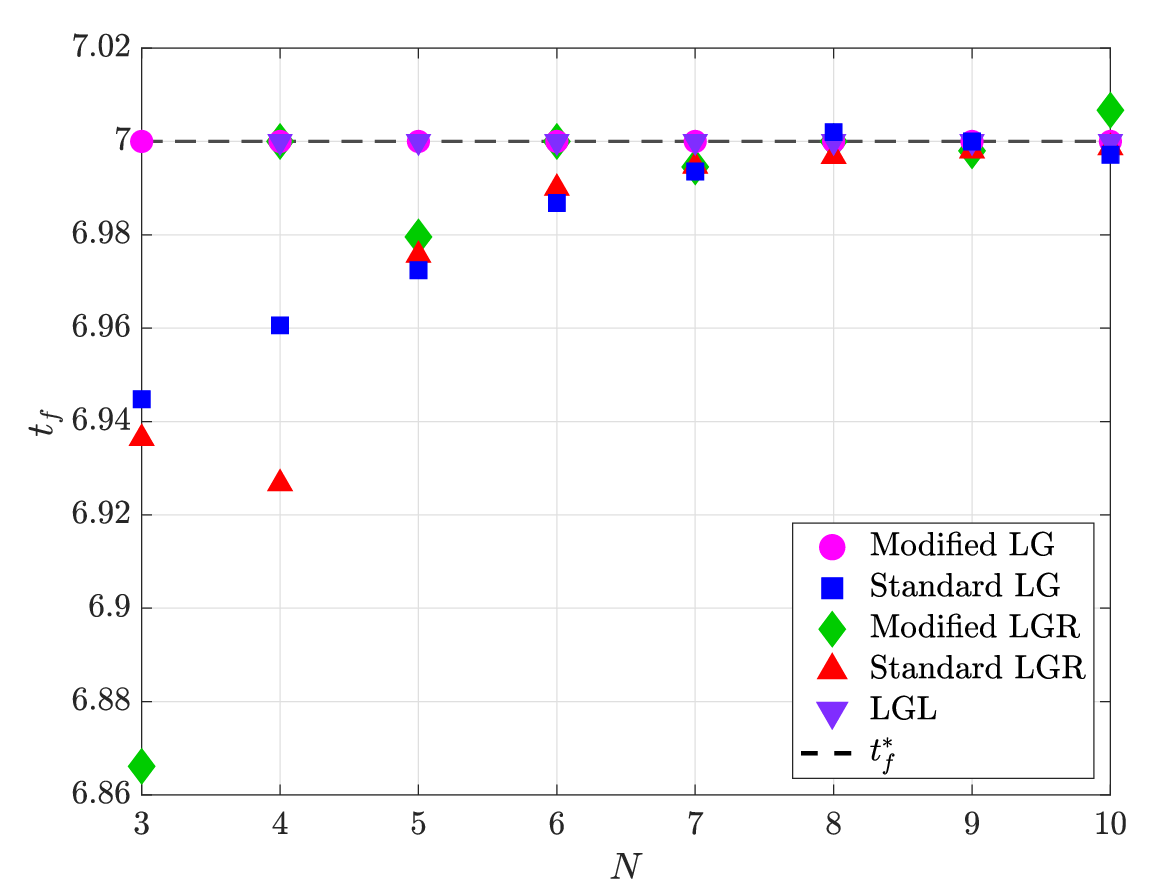}
    \caption{Objective value obtained using each collocation method as $N$ is varied.}
    \label{fig:tf vs N}
\end{figure}

\section{Conclusions}\label{sect:conclusion}

A modified Legendre-Gauss collocation method has been described for solving optimal control problems with nonsmooth solutions. This method augments the standard Legendre-Gauss direct collocation method by introducing additional control variables and variable mesh points as well as enforcing the dynamics at the previously non-collocated interval endpoints. It was shown that the KKT conditions from the NLP obtained via the modified Legendre-Gauss collocation method satisfy the variational conditions of the continuous optimal control problem. An example problem with a discontinuous optimal control profile was used to demonstrate the validity of the method as well as compare the results to those obtained using existing methods. As expected, overall solution accuracy of an optimal control problem with control discontinuities is highly dependent on computation of the correct switch-times. The results obtained in this paper demonstrate the viability of the modified Legendre-Gauss collocation method for solving optimal control problems with nonsmooth solutions when variable mesh points are located in the neighborhood of corresponding control discontinuities.


\bibliographystyle{ieeetr}

\end{document}